\newtheorem{theorem}{Theorem}
\numberwithin{theorem}{section}
\newtheorem{proposition}[theorem]{Proposition}
\newtheorem{lemma}[theorem]{Lemma}
\newtheorem{corollary}[theorem]{Corollary}
\newtheorem{conjecture}[theorem]{Conjecture}
\newtheorem{problem}[theorem]{Problem}
\newcommand{\Exp}{\,\mathbb{E}}
\renewcommand{\Pr}{\,\mathbb{P}}
\newcommand{\eps}{\varepsilon}
\DeclareMathOperator{\Bin}{Bin}
\DeclareMathOperator{\ch}{ch}
\DeclareMathOperator{\sep}{sep}
\title{Separation choosability and dense bipartite induced subgraphs\footnote{This research was partly supported by a Van Gogh grant, reference 35513NM.}}
\author{
Louis Esperet
\thanks{Universit\'e Grenoble Alpes, CNRS, G-SCOP, Grenoble, France.
Email: \protect\href{mailto:louis.esperet@grenoble-inp.fr}{\protect\nolinkurl{louis.esperet@grenoble-inp.fr}}.
This author is partially supported by ANR Projects STINT
  (\textsc{anr-13-bs02-0007}) and GATO (\textsc{anr-16-ce40-0009-01}), and LabEx PERSYVAL-Lab
  (\textsc{anr-11-labx-0025}).}
\and
Ross J. Kang
\thanks{Radboud University Nijmegen, Netherlands.
Email: \protect\href{mailto:r.kang@math.ru.nl}{\protect\nolinkurl{r.kang@math.ru.nl}}.
This author is partially supported by a Vidi grant (639.032.614) of the Netherlands Organisation for Scientific Research (NWO).}
\and
St\'ephan Thomass\'e
\thanks{Laboratoire d'Informatique du
  Parall\'elisme, \'Ecole Normale Sup\'erieure de Lyon, France.
Email: \protect\href{mailto:stephan.thomasse@ens-lyon.fr}{\protect\nolinkurl{stephan.thomasse@ens-lyon.fr}}.}
}
\begin{document}

\maketitle

\begin{abstract}
We study a restricted form of list colouring, for which every pair of lists that correspond to adjacent vertices may not share more than one colour. The optimal list size such that a proper list colouring is always possible given this restriction, we call separation choosability.
We show for bipartite graphs that separation choosability increases with (the logarithm of) the minimum degree. This strengthens results of Molloy and Thron and, partially, of Alon.
One attempt to drop the bipartiteness assumption precipitates a natural class of Ramsey-type questions, of independent interest. For example, does every triangle-free graph of minimum degree $d$ contain a bipartite induced subgraph of minimum degree $\Omega(\log d)$ as $d\to\infty$?
\end{abstract}

AMS Classification: 05C15; 05C40


\section{Introduction}\label{sec:intro}

The concept of {\em list colouring}, where an adversary prescribes which colours may be used per vertex, was introduced independently by Erd\H{o}s, Rubin and Taylor~\cite{ERT80} and Vizing~\cite{Viz76}.
The starting point for our paper is a restriction to list colouring proposed by Kratochv\'il, Tuza and Voigt~\cite{KTV98a}, which one might imagine makes the colouring task much easier.
Let $G = (V,E)$ be a graph. 
For a positive integer $k$, a mapping $L:V\to \binom{\mathbb Z^+}{k}$ is called a {\em $k$-list-assignment} of $G$, and a colouring $c$ of $V$ is called an {\em $L$-colouring} if $c(v)\in L(v)$ for any $v\in V$.
We say a $k$-list-assignment $L$ has {\em maximum separation} if $|L(u)\cap L(v)| \le 1$ for any edge $uv$ of $G$, and $G$ is {\em separation $k$-choosable} if there is a proper $L$-colouring of $G$ for any $k$-list-assignment $L$ with maximum separation. The {\em separation choosability $\ch_{\sep}(G)$} of $G$ is the least $k$ such that $G$ is separation $k$-choosable.
Kratochv\'il, Tuza and Voigt also considered weaker separation, where $|L(u)\cap L(v)| \le q$ for any edge $uv$, but we only treat the most restrictive non-trivial case --- note the colouring task is trivial if $q=0$.
The original parameter {\em choosability $\ch(G)$} of $G$ demands no separation on the lists, and so we have $\ch(G) \ge \ch_{\sep}(G)$ always.

Let us first review some well-known results for choosability.
It was already known to Erd\H{o}s, Rubin and Taylor~\cite{ERT80} that there are bipartite graphs with arbitrarily large choosability. More specifically, by a connection to the extremal behaviour of the set theoretic ``Property B'', they showed, as $d\to\infty$,
that the complete $d$-regular bipartite graphs $K_{d,d}$ satisfy
\begin{align}\label{eqn:ERT80}
\ch(K_{d,d}) = (1+o(1))\log_2 d.
\end{align}
Somewhat later, Alon demonstrated with a probabilistic argument that just having large minimum degree forces the choosability of a graph to be large. The following considerably generalises the lower bound in~\eqref{eqn:ERT80}.
\begin{theorem}[Alon~\cite{Alo00}]\label{thm:Alon}
There is a constant $C>0$ such that $\ch(G) \ge C\log d$ for any graph $G$ with minimum degree $d$.\qed
\end{theorem}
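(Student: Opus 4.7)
The plan is a probabilistic construction of a $k$-list-assignment $L$ on $G$ admitting no proper $L$-colouring, with $k=\lfloor C\log d\rfloor$ for a small absolute constant $C>0$.

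Fix a colour palette $[\sigma]$ with $\sigma$ polynomial in $k$, and sample $L(v)$ independently and uniformly from $\binom{[\sigma]}{k}$ for each $v\in V(G)$. For any map $c:V(G)\to[\sigma]$, independence yields $\Pr[c(v)\in L(v) \text{ for all }v] = (k/\sigma)^n$ where $n=|V(G)|$, so the expected number of proper $L$-colourings equals $P(G,\sigma)\cdot(k/\sigma)^n$, where $P(G,\cdot)$ denotes the chromatic polynomial. By the first-moment method, it then suffices to show that this expectation is strictly less than $1$ for an appropriate choice of $\sigma$, yielding $\ch(G)>k$.

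The central difficulty is an upper bound on $P(G,\sigma)$ that genuinely exploits the hypothesis $\delta(G)\geq d$. Standard inequalities (Whitney's bound for connected graphs, or the sequential-colouring product $\prod_v(\sigma-b_v(\pi))$ for an ordering $\pi$) are insensitive to minimum degree, and the natural heuristic $\sigma^n(1-1/\sigma)^{|E(G)|}$ is not an upper bound (it already fails for $C_4$). A more robust route is first to pass to a bipartite subgraph $H\subseteq G$ of minimum degree $\Omega(d)$---obtained by taking an edge-maximum cut (which has at least $|E(G)|/2\geq nd/4$ edges) and then iteratively pruning low-degree vertices---and then to exploit the bipartite structure to count proper $L$-colourings more tightly, since each colour class respects the sides of the bipartition. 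This puts one close to an Erd\H{o}s--Rubin--Taylor style analysis as recalled in~\eqref{eqn:ERT80}, where the extremal quantity resembles the $2$-colouring number of a random hypergraph whose edges are the assigned lists.

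The main obstacles are thus (i) producing a bipartite subgraph of linear minimum degree rather than merely linear average degree, and (ii) executing a Property B--flavoured counting argument on the random $L$ within this subgraph to recover the $\Omega(\log d)$ threshold; step (i) is the more delicate, since iterative pruning must be shown not to strip away the entire subgraph. Once both pieces are in place, the first-moment balance between $(k/\sigma)^n$ and $P(H,\sigma)^{1/|V(H)|}$ reduces to an arithmetic check, and tuning $\sigma$ polynomially in $k=\Theta(\log d)$ completes the argument.
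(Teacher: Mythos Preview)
The paper does not itself prove Theorem~\ref{thm:Alon}; it is quoted from~\cite{Alo00} with a \qed. However, Alon's argument is reproduced almost verbatim inside the proof of Theorem~\ref{thm:main}, so one can read off what it actually does: reduce to a bipartite graph (for plain choosability this is immediate by monotonicity), then run a \emph{two-stage} random list assignment. First pick a sparse random set $A\subseteq V_1$ and assign random lists to it; show that most vertices of $V_2$ become ``good'' in the sense that every possible list appears among their $A$-neighbours. Then, for each of the at most $k^{|A|}$ colourings of $A$, assign random lists to the good vertices so that with positive probability none of them can extend that colouring. The union bound over colourings of $A$ closes the argument.

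Your proposal is a genuinely different route, and it has a fatal gap. The first-moment identity $\Exp[\#\text{proper }L\text{-colourings}]=P(G,\sigma)(k/\sigma)^n$ is correct, but it is too weak to ever drop below $1$ once $k\ge 2$, already for $K_{d,d}$. Indeed, by splitting the palette in half one gets $P(K_{d,d},\sigma)\ge(\sigma/2)^{2d}$, so
\[
P(K_{d,d},\sigma)\,(k/\sigma)^{2d}\ \ge\ (k/2)^{2d}\ \ge\ 1
\]
for every choice of $\sigma$. No amount of tuning $\sigma$ rescues this, and passing to a bipartite subgraph only puts you in exactly the regime where the bound is useless. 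The point is that a huge number of proper colourings are near-duplicates (permute colours on one side), so the first moment wildly overcounts; Alon's two-stage scheme sidesteps this by fixing the colouring on a tiny set $A$ first and only then randomising the remaining lists. Incidentally, your step~(i) is not delicate at all: a maximum cut already gives a spanning bipartite subgraph in which every vertex keeps at least half its degree, so minimum degree $\ge d/2$ with no pruning needed. The real work, and the part your sketch does not supply, is an argument that is not a bare first moment over all proper colourings.
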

\noindent
Using containers, Saxton and Thomason~\cite{SaTh15} recently improved this lower bound to $(1+o(1))\log_2 d$ as $d\to\infty$, which is asymptotically optimal by~\eqref{eqn:ERT80}.

One might wonder to what extent the results above remain valid when we impose maximum separation on the lists.
F\"uredi, Kostochka and Kumbhat~\cite{FKK14} already considered the complete $d$-regular bipartite graphs.
By the use of an elegant extremal set theoretic construction (see Lemma~\ref{lem:FKK} below), they strengthened~\eqref{eqn:ERT80} by showing that, as $d\to\infty$, 
\begin{align}\label{eqn:FKK}
\ch_{\sep}(K_{d,d}) = (1+o(1))\log_2 d.
\end{align}
In the first part of this paper, we generalise the lower bound in~\eqref{eqn:FKK}.

\begin{theorem}\label{thm:main}
There is a constant $C>0$ such that $\ch_{\sep}(G) \ge C\log d$ for any bipartite graph $G$ with minimum degree $d$.
\end{theorem}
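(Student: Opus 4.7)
The plan is to combine the F\"uredi--Kostochka--Kumbhat extremal set-theoretic construction (Lemma~\ref{lem:FKK} below) with a probabilistic argument in the spirit of Alon's proof of Theorem~\ref{thm:Alon}. The role of the FKK family is to provide maximum separation essentially for free, so that all the real work goes into the non-colourability estimate, which is where the minimum-degree hypothesis will bite.

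Concretely, I fix $k=\lfloor C\log d\rfloor$ for a small absolute constant $C>0$ to be tuned, and invoke Lemma~\ref{lem:FKK} to obtain a family $\mathcal{F}$ of $k$-subsets of some ground set $[N]$, where $N$ is polynomial in $k$, such that $|S\cap S'|\le 1$ for all distinct $S,S'\in\mathcal{F}$ and $|\mathcal{F}|$ is very large (super-polynomial in $N$). For each $v\in V(G)$ I would independently sample $L(v)\in\mathcal{F}$ uniformly at random. Then $|L(u)\cap L(v)|\le 1$ holds for every pair of vertices, so $L$ is automatically a separation $k$-list-assignment.

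The substance of the argument is to show that with positive probability no proper $L$-colouring exists, and for this I would follow Alon's template. For each colour $i\in[N]$ put $S_i:=\{v:i\in L(v)\}$; then a proper $L$-colouring corresponds to a partition $V=I_1\sqcup\cdots\sqcup I_N$ into $G$-independent sets with $I_i\subseteq S_i$. Instead of union-bounding directly over colourings, one enumerates over these partitions; the total enumeration cost is governed by $\sum_i|S_i|=k|V|$, and the regularity of $\mathcal{F}$ yields $\Pr[i\in L(v)]=k/N$. The bipartite min-degree hypothesis enters through the decomposition $S_i=S_i^A\cup S_i^B$: conditional on $v\in S_i^A$, the expected number of $v$'s neighbours in $S_i^B$ is $dk/N$, which is $\Omega(\log d)$ once $N$ is chosen of order $d$. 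This density in $G[S_i]$ forces $\alpha(G[S_i])$ to be small on average, so the $I_i$'s cannot cover $V$ and the expected number of $L$-colourings drops below one.

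The principal obstacle is the parameter balance. I want $N$ small enough, roughly linear in $d$, that $G[S_i]$ is dense enough to rule out large independent sets, yet large enough that Lemma~\ref{lem:FKK} delivers a family $\mathcal{F}$ of pairwise-almost-disjoint $k$-sets on $[N]$ (which typically demands $N\gtrsim k^2$). Since $k=\Theta(\log d)$, these two requirements are compatible for $d$ large, but the computation has to be done carefully. A secondary technical wrinkle is that drawing $L(v)$ from $\mathcal{F}$ rather than as a uniformly random $k$-subset introduces mild dependencies between the indicator events $\{i\in L(v)\}_i$, so these cannot be treated as independent; the regularity of $\mathcal{F}$ should be enough to control this via second-moment-type estimates on $|S_i|$ and on the edge count of $G[S_i]$, and I expect this to be where the cleanest version of the argument demands the most care.
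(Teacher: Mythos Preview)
Your proposal has two genuine gaps that keep it from working as stated.

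First, you have misread Lemma~\ref{lem:FKK}. It does \emph{not} produce a single family $\mathcal{F}$ with $|S\cap S'|\le 1$ for all distinct $S,S'\in\mathcal{F}$; it produces \emph{two} $k$-uniform hypergraphs $H_1,H_2$ on $[4k^4]$ that are nearly disjoint \emph{from each other} (every edge of $H_1$ meets every edge of $H_2$ in at most one point) and, crucially, each has stability number below $2k^4$. Within a single $H_i$ there is no pairwise intersection bound at all. Consequently your separation claim fails in two ways: if you draw all lists from one family, adjacent vertices can receive the same list (so $|L(u)\cap L(v)|=k$), and even distinct lists from one $H_i$ may overlap in more than one point. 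In the paper's proof, separation is obtained precisely by exploiting the bipartition: vertices in $V_1$ get lists from $F_1=E(H_1)$, vertices in $V_2$ get lists from $F_2=E(H_2)$, and near-disjointness between the two families is exactly what guarantees maximum separation across every edge of $G$.

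Second, and more seriously, your non-colourability step cannot work as written. You argue that density in $G[S_i]$ ``forces $\alpha(G[S_i])$ to be small on average, so the $I_i$'s cannot cover $V$''. But $G$ is bipartite, so $G[S_i]$ is bipartite, and hence $\alpha(G[S_i])\ge |S_i|/2$ regardless of how many edges it has. Bounding independence numbers of the colour classes is therefore hopeless here. The paper's argument is structurally different: it uses Alon's two-stage scheme, first planting random lists from $F_1$ on a sparse random subset $A\subseteq V_1$, then showing that for any proper colouring $c_A$ of $A$, a typical ``good'' vertex $v\in V_2$ has a set $C_v$ of forbidden colours that meets every edge of $H_1$ (by goodness) and hence, via the \emph{stability number} bound $\alpha(H_1),\alpha(H_2)<2k^4$, must contain an entire edge $f_v\in F_2$. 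If the random list of $v$ from $F_2$ happens to be $f_v$, then $v$ is blocked. A union bound over the at most $k^{|A|}$ colourings of $A$ finishes. The low stability number of the FKK hypergraphs---which your sketch never invokes---is the combinatorial engine, not any independence bound in $G$.
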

\noindent
This partially extends Theorem~\ref{thm:Alon}. It also incidentally improves upon a result of Molloy and Thron~\cite{MoTh11} about adaptable choosability, an implication we describe in Subsection~\ref{sub:adapt}.
We give the proof of Theorem~\ref{thm:main} in Subsection~\ref{sub:proof}. The proof combines the construction of~\cite{FKK14} with the probabilistic argument of~\cite{Alo00}, and it requires the assumption that $G$ be bipartite. 

Curiously, the question remains whether a form of Theorem~\ref{thm:main} holds if we drop the bipartiteness assumption. 
\begin{conjecture}\label{conj:chsep}
There is a function $x_1(d)$ that satisfies $x_1(d)\to \infty$ as $d\to\infty$ such that $\ch_{\sep}(G) \ge x_1(d)$ for any graph $G$ with minimum degree $d$.
\end{conjecture}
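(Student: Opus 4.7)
The plan is to reduce Conjecture~\ref{conj:chsep} to a Ramsey-type question about extracting an induced bipartite subgraph of large minimum degree, and then to invoke Theorem~\ref{thm:main}. The first ingredient is the monotonicity of $\ch_{\sep}$ under induced subgraphs: given an induced subgraph $H$ of $G$ and a $k$-list-assignment $L$ of $H$ with maximum separation, I would extend $L$ to $G$ by assigning to each vertex $v\in V(G)\setminus V(H)$ a list of $k$ fresh colours, chosen pairwise disjoint across such $v$ and disjoint from every colour appearing in $L$. The extension $L'$ trivially has maximum separation on $G$, because every edge incident to $V(G)\setminus V(H)$ has disjoint endpoint lists; hence any proper $L'$-colouring of $G$ restricts to a proper $L$-colouring of $H$, which yields $\ch_{\sep}(G)\ge \ch_{\sep}(H)$.

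Suppose now one could establish the following bipartite-induced-subgraph statement: there is a function $f$ with $f(d)\to\infty$ as $d\to\infty$ such that every graph $G$ with minimum degree $d$ contains a bipartite induced subgraph $H$ with $\delta(H)\ge f(d)$. Combining this with the monotonicity above and Theorem~\ref{thm:main} applied to $H$ would give
\[
\ch_{\sep}(G) \;\ge\; \ch_{\sep}(H) \;\ge\; C\log f(d),
\]
so the choice $x_1(d) := C\log f(d)$ would settle the conjecture. The entire burden is therefore concentrated in this extraction problem, which is precisely the Ramsey-type question flagged in the abstract.

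The main obstacle -- and the real content of the conjecture under this strategy -- is thus the Ramsey question itself, already in the triangle-free case. A uniformly random balanced bipartition produces a \emph{spanning} bipartite subgraph that retains in expectation half the average degree, but it neither preserves inducedness (so non-edges across the cut are not guaranteed) nor controls minimum degree. A natural line of attack for the triangle-free case is iterative: fix a vertex $v$, use a Shearer-type independence bound inside $N(v)$ to produce a large independent set $A$, then carve out a set $B$ at distance two from $v$ so that $(A,B)$ spans an induced bipartite subgraph with controlled average degree, and iterate on a surviving high-degree core. The key technical hurdle is upgrading an average cross-degree guarantee into a minimum-degree guarantee, presumably via a peeling argument that deletes only a bounded proportion of each side. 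For graphs containing many triangles one would need an additional preliminary step -- for example, removing an edge-dense piece to reach a locally sparser regime -- before the triangle-free strategy can take over; this dichotomy between the locally dense and locally sparse cases looks like the second significant difficulty.
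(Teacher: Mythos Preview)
Your reduction scheme matches the paper's intended route, but the intermediate hypothesis you isolate is false as stated. You ask for a function $f$ with $f(d)\to\infty$ such that \emph{every} graph of minimum degree $d$ contains a bipartite induced subgraph of minimum degree at least $f(d)$; the complete graph $K_{d+1}$ defeats this outright, since any induced bipartite subgraph of a clique has at most two vertices and hence minimum degree at most $1$. Your closing remark about a ``locally dense / locally sparse'' dichotomy gestures at the difficulty but does not supply a repair: peeling edge-dense pieces off $K_{d+1}$ never leaves a triangle-free graph of growing minimum degree, so the proposed preliminary step cannot feed back into the bipartite extraction.

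The paper's approach (Conjecture~\ref{conj:cliquebip}) fixes exactly this by asking only for a dichotomy: either a clique $K_{x_2(d)}$ with $x_2(d)\to\infty$, or a bipartite induced subgraph of minimum degree $x_3(d)\to\infty$. The clique branch is then handled not by Theorem~\ref{thm:main} but by the separate result of Kratochv\'il, Tuza and Voigt that $\ch_{\sep}(K_{m})\sim\sqrt{m}$, combined with the same induced-subgraph monotonicity of $\ch_{\sep}$ that you correctly verify. Within the paper this route remains conditional; the note added records that Kwan, Letzter, Sudakov and Tran subsequently proved Conjecture~\ref{conj:cliquebip}, confirming Conjecture~\ref{conj:chsep} along precisely this two-branch argument rather than the single-branch one you propose.
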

\noindent
If we only knew Theorem~\ref{thm:Alon} for bipartite $G$, we could still conclude the same way for general $G$ due to the fact that every graph of average degree at least $d$ contains a bipartite subgraph of minimum degree at least $d/2$. This fact and Theorem~\ref{thm:main} are not enough to derive the same conclusion for separation choosability of general $G$: a (bad) list assignment that has maximum separation does not necessarily keep it upon the addition of edges.
On the other hand,
Kratochv\'il, Tuza and Voigt~\cite{KTV98b} have shown that the complete graph $K_{d+1}$ on $d+1$ vertices satisfies $\ch_{\sep}(K_{d+1}) \sim \sqrt{d}$ as $d\to\infty$. (So separation {\em does} help in this situation.)
A natural tack therefore is, in any graph of given minimum degree, to look for either a large complete subgraph or a dense bipartite {\em induced} subgraph.
More precisely, the following if true would imply Conjecture~\ref{conj:chsep}.

\begin{conjecture}\label{conj:cliquebip}
There are functions $x_2(d)$ and $x_3(d)$ satisfying $x_2(d)\to \infty$ and $x_3(d)\to\infty$ as $d\to\infty$ such that
any graph with minimum degree at least $d$ contains a complete subgraph on $x_2(d)$ vertices or a bipartite induced subgraph with minimum degree at least $x_3(d)$.
\end{conjecture}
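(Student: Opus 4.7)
The plan is to dichotomise on whether every neighborhood in $G$ contains a large independent set. Set $t = x_2(d)$ and suppose $G$ has minimum degree $d$ yet contains no copy of $K_t$; the goal is to produce a bipartite induced subgraph of minimum degree $x_3(d) \to \infty$. For any vertex $v$, the graph $G[N(v)]$ is $K_{t-1}$-free, so Ramsey's theorem gives an independent set $A \subseteq N(v)$ of size roughly $d^{1/(t-2)}$, which tends to infinity provided $t$ grows slowly enough (e.g.\ $t = x_2(d)$ of order $\log\log d$). This supplies one side of the prospective bipartite induced subgraph.

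The harder task is to locate an independent set $B$, disjoint from $A \cup \{v\}$, for which $G[A \cup B]$ has large minimum degree on both sides. My first attempt would adapt the dependent random choice framework: sample a small random subset $S \subseteq A$ of size $s$ and consider the common neighborhood $U = \bigcap_{u \in S} N(u)$. A standard computation shows that in expectation $|U|$ is polynomially large and that a large fraction of vertices in $U$ also have $\Omega(|A|)$ neighbors throughout $A$. Since $G[U]$ inherits $K_{t-1}$-freeness (any clique together with $S$ would overshoot $K_t$), one can invoke an Ajtai--Koml\'os--Szemer\'edi or Shearer-type independent set bound to extract an independent $B \subseteq U$ of size $\Omega(|U| \log |U| / \overline{\deg}(G[U]))$. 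A final two-sided pruning step, iteratively deleting low-degree vertices from both $A$ and $B$, should bring the minimum degree of the surviving bipartite induced subgraph up to something like $\Omega(\log d)$ if the parameters $s$ and $t$ are chosen with care.

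The main obstacle, and the reason this remains conjectural, is the tension between two competing requirements on $B$: independence in $G$ forces $B$ to be sparse in the $K_{t-1}$-free graph $G[U]$ (so small relative to $|U|$), while the minimum degree condition on the $A$-side demands that $B$ be large enough that \emph{every} $a \in A$ has $x_3(d)$ neighbors in $B$. Balancing these without incurring a $\poly(d)$ loss in the degree appears to require genuinely new ideas. One promising refinement would be to replace the local Ramsey step by a global chromatic-number argument \`a la Johansson--Molloy: in a $K_t$-free graph of minimum degree $d$, the existence of $\Omega(d / \log d)$ disjoint large independent sets might be combined with a pigeonhole argument to guarantee a pair of them whose bipartite induced subgraph retains minimum degree $\Omega(\log d)$. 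Until such a sharper extraction tool is available, the conjecture looks genuinely open even in the triangle-free special case highlighted in the abstract.
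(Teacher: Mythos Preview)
The statement you are addressing is a \emph{conjecture} in the paper, not a theorem; the paper offers no proof, and the ``Note added'' records that it was only subsequently settled by Kwan, Letzter, Sudakov and Tran~\cite{KLST18+}. Your proposal is likewise not a proof but an honest outline of an approach together with a clear identification of the obstruction, so in that sense it matches the paper's own status for this statement.

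It is still worth contrasting the strategies. Your plan is local: extract a large independent set $A$ inside a single neighbourhood via Ramsey, then use dependent random choice plus an Ajtai--Koml\'os--Szemer\'edi-type bound to locate a second independent set $B$ richly connected to $A$. The paper instead pursues a global colouring route: Theorem~\ref{thm:fractional} shows that fractional chromatic number at most $k$ and average degree $d$ force a bipartite induced subgraph of average degree at least $d/k$, so the conjecture (in the triangle-free case) would follow from a bound of the form $Cd/\log d$ on the fractional chromatic number of $d$-degenerate triangle-free graphs (Conjecture~\ref{conj:harris}). That colouring approach yields the partial results of Section~\ref{sec:cliquebip} (the near-regular case, the dense-in-$n$ case, and the semi-bipartite relaxation) but, like yours, stops short of the full statement. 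The dependent-random-choice idea you sketch does not appear in the paper; the tension you correctly isolate---that independence of $B$ in a $K_{t-1}$-free host forces $B$ to be small, while the $A$-side minimum-degree requirement forces $B$ to be large---is precisely why neither route closes the gap on its own.
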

\noindent
In the second part of this paper, we consider some basic aspects of this conjecture.
Irrespective of separation choosability, this basic Ramsey-type problem is intriguing.

In fact, there is a range of (quantitative) variations on Conjecture~\ref{conj:cliquebip} which are also natural and interesting. Here follow two such variations.
\begin{conjecture}\label{conj:trianglebip}
There is a constant $C>0$ such that any triangle-free graph with minimum degree at least $d$ contains a bipartite induced subgraph of minimum degree at least $C\log d$.
\end{conjecture}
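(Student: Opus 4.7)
The plan is to build a bipartite induced subgraph of $G$ from a pair of colour classes in a good proper colouring, using the chromatic number bound for triangle-free graphs as main input. By Johansson's theorem, any triangle-free graph satisfies $\chi(G) \le C\Delta/\ln\Delta$ for an absolute constant $C$, where $\Delta$ is the maximum degree. In the regime $\Delta = O(d)$ this gives $\chi(G) = O(d/\ln d)$, and I treat this case first.

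Fix a proper colouring of $G$ with $\chi := \chi(G)$ colours, and pick an unordered pair of distinct colours $\{c_1,c_2\}$ uniformly at random. Let $J_1, J_2$ be the corresponding colour classes: they are disjoint independent sets, so the bipartite subgraph $H$ of $G$ with parts $(J_1, J_2)$ is induced. For each edge $uv \in E(G)$, $\Pr[\{c(u),c(v)\} = \{c_1,c_2\}] = 1/\binom{\chi}{2}$, hence
\[
\Exp[e(H)] \;=\; \frac{e(G)}{\binom{\chi}{2}} \;\ge\; \frac{nd}{\chi^{2}},
\]
while $\Exp[|J_1| + |J_2|] = 2n/\chi$ by linearity. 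So the expected average degree in $H$ is of order $d/\chi = \Omega(\log d)$. To upgrade to minimum degree, observe that with $\chi \le Cd/\ln d$ one has $\Exp\bigl[e(H) - c\log d \cdot (|J_1|+|J_2|)\bigr] > 0$ for any constant $c < 1/(2C)$, so at least one realisation of $\{c_1,c_2\}$ satisfies $e(H) > c\log d\cdot(|J_1|+|J_2|)$. In that $H$, iteratively delete any vertex of current degree below $c\log d$; each removal destroys fewer than $c\log d$ edges, hence strictly fewer edges are lost in total than are initially present. The surviving induced bipartite subgraph is non-empty and has minimum degree at least $c\log d$.

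The main obstacle is the assumption $\Delta = O(d)$. When $\Delta \gg d$, Johansson only gives $\chi = O(\Delta/\log\Delta)$, so $d/\chi$ can be much smaller than $\log d$ and the averaging fails. A natural dichotomy is the following: if $G$ contains some vertex $v$ with $\deg(v)$ very large, use $A := N(v)$ (already an independent set of size $\ge d$) as one part and look for an independent set $B \subseteq V \setminus N[v]$ with many edges to $A$, by applying Johansson inside $G[V\setminus N[v]]$; otherwise $\Delta = O(d)$ and the main argument applies. The trouble is that $G[V \setminus N[v]]$ may in turn carry a large max-to-min degree ratio, so the reduction loops. Equivalently, one would like to pre-process $G$ to an induced subgraph with $\delta = \Omega(d)$ and $\Delta = O(d)$, but producing such a subgraph inside an arbitrary triangle-free graph of minimum degree $d$ appears to capture much of the content of the conjecture itself, and is where the genuine difficulty lies.
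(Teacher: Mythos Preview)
You have correctly identified the genuine gap, and indeed you have not proved the statement --- but neither does the paper. This statement is a \emph{conjecture}, explicitly left open by the authors. Your partial argument is essentially the paper's own partial progress: Theorem~\ref{thm:integral} (or its fractional strengthening Theorem~\ref{thm:fractional}) plus Johansson's bound yields exactly your nearly-regular case, recorded as Theorem~\ref{thm:Joh96a}. Your random-pair-of-colour-classes computation is the same averaging argument the paper uses to prove Theorem~\ref{thm:fractional}, and your deletion step to pass from average to minimum degree is the standard observation underlying the interchangeable use of average and minimum degree in Section~\ref{sec:cliquebip}.

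Your diagnosis of the obstacle is also the paper's: when $\Delta \gg d$, Johansson only gives $\chi = O(\Delta/\log\Delta)$ and the averaging is too weak. The paper frames the missing ingredient as Conjecture~\ref{conj:harris} (Harris): that $d$-degenerate triangle-free graphs have fractional chromatic number $O(d/\log d)$. Since one may assume the triangle-free graph is $d$-degenerate, this would feed directly into Theorem~\ref{thm:fractional} and finish the proof. Your proposed dichotomy --- peel off a neighbourhood $N(v)$ as one part and recurse inside $G[V\setminus N[v]]$ --- is a natural thought, but as you note it loops, and the paper does not know how to break that loop either.

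For context, the ``Note added'' records that after the paper was posted, Kwan, Letzter, Sudakov and Tran obtained $\Omega(\log d/\log\log d)$ bipartite induced minimum degree in triangle-free graphs, nearly settling the conjecture; the full $\Omega(\log d)$ bound was, at the time of writing, still open.
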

\noindent
If true this would be sharp up to the choice of $C$ (Theorem~\ref{thm:trianglebip}).

\begin{conjecture}\label{conj:girthbip}
There exist $d_0$ and $g_0$ such that any graph of girth at least $g_0$ with minimum degree at least $d_0$ contains a bipartite induced subgraph of minimum degree at least $3$.
\end{conjecture}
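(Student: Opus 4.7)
The plan is to randomly 2-colour $V(G)$, apply the Lovász Local Lemma (LLL) using the girth assumption to ensure every vertex has many opposite-colour neighbours, and then extract from this balanced colouring an induced bipartite subgraph of minimum degree $\ge 3$.

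First, take $g_0\ge 5$; then any two neighbours of a common vertex in $G$ share no further common neighbour. Colour each vertex red or blue uniformly and independently. For each $v\in V(G)$, let $A_v$ be the event that $v$ has fewer than some threshold $\tau\ge 3$ opposite-colour neighbours; a binomial tail estimate gives $\Pr(A_v)\le \tau\binom{d_v}{\tau-1}2^{-d_v}$, which decays exponentially in $d_v\ge d_0$. Since $A_v$ is determined by the colours on the closed neighbourhood $N[v]$, events $A_u$ and $A_v$ are independent whenever $u$ and $v$ are at graph distance at least $3$. Bounding $|N_{\le 2}(v)|$ via the girth condition and applying the asymmetric LLL then yields, for $d_0$ large enough, a 2-colouring in which every vertex has at least $\tau$ opposite-colour neighbours.

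Second, we extract an induced bipartite subgraph. Let $M$ be the (random) set of monochromatic edges. We seek $S\subseteq V(G)$ such that no edge of $M$ has both endpoints in $S$ (so $G[S]$ is bipartite with respect to the colouring) and every $v\in S$ has at least $3$ neighbours in $S$. A natural continuation is a second random step: choose an endpoint of each edge of $M$ to delete, and apply a further LLL to the events ``$v$ is deleted'' and ``$v$ loses too many opposite-colour neighbours through deletion''. The girth hypothesis renders the monochromatic subgraph $M$ locally sparse, which is precisely what makes such a Local Lemma step viable.

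The main obstacle is the tension between the two requirements in this second step: eliminating every monochromatic edge forces deletions, while maintaining minimum degree $\ge 3$ forbids deleting too many neighbours of any surviving vertex. Certifying that they are jointly satisfiable — and quantifying how large $d_0$ and $g_0$ must be — appears to be the crux of the conjecture. A purely probabilistic argument may not suffice; a successful proof may ultimately require a more structural, Ramsey-type decomposition of $G$ (for instance, into several long induced even cycles spliced together along shared edges to yield a bipartite subgraph of minimum degree $3$), in the spirit of Conjecture~\ref{conj:trianglebip}.
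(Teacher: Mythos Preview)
The statement you are attempting to prove is a \emph{conjecture} in the paper; the paper contains no proof. The authors only observe that the weakening with $2$ in place of $3$ follows from known results on even holes, and in a note added they record that the full conjecture was subsequently confirmed by Kwan, Letzter, Sudakov and Tran via the much stronger fact that triangle-free graphs of minimum degree $d$ contain bipartite induced subgraphs of minimum degree $\Omega(\log d/\log\log d)$. So there is no ``paper's own proof'' to compare against, and the known proof goes through Conjecture~\ref{conj:trianglebip}-type bounds rather than anything resembling your two-round Local Lemma scheme.

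Your proposal also has a genuine gap already in the first round. You write ``Bounding $|N_{\le 2}(v)|$ via the girth condition'', but large girth does not bound $|N_{\le 2}(v)|$: the conjecture imposes only a \emph{minimum} degree hypothesis, so a vertex $v$ of degree exactly $d_0$ may have a neighbour $w$ of arbitrarily large degree, giving $v$ arbitrarily many vertices at distance $2$, each of which may itself have degree only $d_0$. In the asymmetric LLL condition for $A_v$ one then has a product $\prod_u(1-x_u)$ over unboundedly many $u$ with $x_u$ bounded below (since $\Pr(A_u)$ is only as small as $e^{-c d_0}$), and the condition cannot be satisfied. Your first step is valid for graphs of bounded maximum degree (and indeed the nearly-regular case is easy for other reasons, cf.\ Theorem~\ref{thm:Joh96a}), but not in general. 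You rightly identify the second step as the crux and leave it open; in fact the first step already fails, and the proposal as written is a heuristic outline rather than a proof.
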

\noindent
If $3$ is replaced by $2$ in Conjecture~\ref{conj:girthbip}, then the conclusion is equivalent to containing an even hole. This weaker statement is true with $g_0=4$ and $d_0=3$ by a result of Radovanovi\'{c} and Vu\v{s}kovi\'{c}~\cite[Thm.~1.6]{RaVu13}. Note that even holes can be detected efficiently~\cite{CKS05,CCKV02}, but detection of bipartite induced subgraphs of minimum degree at least 3 is an NP-complete problem~\cite{Hav17}. 

In Section~\ref{sec:cliquebip}, we offer partial progress, towards Conjecture~\ref{conj:trianglebip} especially.
To that end we have found it especially useful that, to find a dense bipartite induced subgraph in a graph of given minimum degree, it suffices to find a good proper colouring. 
\begin{theorem}\label{thm:integral}
Any graph with chromatic number at most $k$ and minimum degree $d$ has a bipartite induced subgraph of minimum degree at least $d/2k$.
\end{theorem}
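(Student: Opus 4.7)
The plan is a straightforward averaging argument on the color classes, followed by the standard peeling lemma that converts average degree into minimum degree.

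Fix a proper $k$-coloring of $G$ with color classes $V_1,\dots,V_k$, and for every pair $i<j$ let $H_{ij}=G[V_i\cup V_j]$. Since colors are proper, every edge of $G$ lies in exactly one $H_{ij}$, so
\[
\sum_{i<j} e(H_{ij}) \;=\; e(G) \;\ge\; \tfrac{1}{2}\,|V(G)|\cdot d,
\]
using the minimum degree hypothesis. On the other hand each color class $V_\ell$ appears in exactly $k-1$ of the pairs, so
\[
\sum_{i<j}\bigl(|V_i|+|V_j|\bigr) \;=\; (k-1)\,|V(G)|.
\]

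Dividing the first display by the second and multiplying by $2$, the ratio of totals is at least $d/(k-1)$. Hence some pair $\{i,j\}$ must have
\[
\frac{2\,e(H_{ij})}{|V_i|+|V_j|} \;\ge\; \frac{d}{k-1},
\]
i.e.\ the bipartite graph $H_{ij}$ has average degree at least $d/(k-1)$.

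Now I invoke the classical fact that every graph of average degree $\bar d$ contains a non-empty induced subgraph of minimum degree at least $\bar d/2$: repeatedly delete any vertex of degree strictly less than $\bar d/2$; a short calculation shows the average degree never drops, so the process must terminate in a non-empty subgraph. Applied to $H_{ij}$ this produces an induced subgraph of minimum degree at least $d/(2(k-1))\ge d/(2k)$, which is automatically bipartite as an induced subgraph of $H_{ij}$. There is no real obstacle — the only thing to check carefully is the weighted averaging step, where one must remember that each $V_\ell$ is counted with multiplicity $k-1$ (not $\binom{k}{2}$) in $\sum_{i<j}(|V_i|+|V_j|)$.
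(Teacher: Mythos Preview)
Your argument is correct. The weighted pigeonhole step is sound: from $\sum_{i<j} e(H_{ij})\ge \tfrac12|V(G)|d$ and $\sum_{i<j}(|V_i|+|V_j|)=(k-1)|V(G)|$ it follows that some pair has $2e(H_{ij})/(|V_i|+|V_j|)\ge d/(k-1)$, and the peeling lemma then finishes. You even obtain $d/(2(k-1))$ rather than $d/(2k)$.

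The paper, however, does not argue this way. It first proves a more general statement (Theorem~\ref{thm:fractional}): any graph of \emph{fractional} chromatic number $k$ and average degree $d$ contains a bipartite induced subgraph of average degree at least $d/k$. That proof is probabilistic: two stable sets $\mathbf{S_1},\mathbf{S_2}$ are drawn independently from a distribution with $\Pr(v\in\mathbf{S})=1/k$, and linearity of expectation shows $\Exp\bigl(|E(G[\mathbf{S_1}\cup\mathbf{S_2}])|-(|\mathbf{S_1}|+|\mathbf{S_2}|)\tfrac{d}{2k}\bigr)\ge 0$. Theorem~\ref{thm:integral} is then read off as a corollary. Your deterministic pigeonhole over the $\binom{k}{2}$ pairs of colour classes is exactly what this probabilistic argument collapses to when the distribution is uniform over the classes of an integral colouring (and you avoid the nuisance case $\mathbf{S_1}=\mathbf{S_2}$). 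So your route is more elementary and slightly sharper for the integral statement, while the paper's route buys the extension to fractional chromatic number, which is what is actually used later (e.g.\ in reducing Conjecture~\ref{conj:trianglebip} to Conjecture~\ref{conj:harris}).
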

\noindent
We show this as corollary to a more general result below, Theorem~\ref{thm:fractional}. 
In Subsection~\ref{sub:evidence}, we illustrate how, together with classic results about colourings and stable sets in triangle-free graphs, Theorem~\ref{thm:integral} yields special cases of Conjecture~\ref{conj:trianglebip}: in particular, if the triangle-free graph is nearly regular (Theorem~\ref{thm:Joh96a}) or if it has sufficiently large minimum degree with respect to the number of vertices (Theorem~\ref{thm:AKS80}). By a related but different method, we prove a weaker yet still sharp form of Conjecture~\ref{conj:trianglebip} where edges are permitted for one of the two parts of the sought bipartite subgraph (Theorem~\ref{thm:aksrev}).

\section{Separation choosability of bipartite graphs}\label{sec:sep}

In this section we are primarily concerned with proving Theorem~\ref{thm:main}, but we also discuss some of the implications for another list colouring notion called adaptable choosability, which we discuss in more detail further on.

Although it is not needed for the proof, it may reveal broader context to notice how Theorems~\ref{thm:Alon} and~\ref{thm:main} relate choosability and separation choosability to graph density. Recall the {\em degeneracy} of a graph is the maximum over all subgraphs of the minimum degree; a graph is {\em $d$-degenerate} if its degeneracy is at most $d$. A simple greedy argument yields $\ch(G) \le d+1$ for any $d$-degenerate graph $G$.
Theorem~\ref{thm:Alon} implies that choosability grows with degeneracy: there exists $C>0$ such that
\begin{align}\label{eqn:degenerate}
C\log d \le \ch(G) \le d+1
\end{align}
for any graph $G$ with degeneracy $d$.
Similarly, Theorem~\ref{thm:main} allows us conclude for $\ch_{\sep}(G)$ as in~\eqref{eqn:degenerate} except only for {\em bipartite} $G$ of degeneracy $d$.

We note that by an adaptation of a construction of Kostochka and Zhu~\cite{KoZh08}, for every integer $d$ there is a $d$-degenerate graph that is not separation $d$-choosable.
Moreover, Alon, Kostochka, Reiniger, West and Zhu~\cite{AKRWZ16} have recently shown, with the help of an extremely elegant graph construction, that for every $g$ and $k$ there is a bipartite graph $G$ of girth at least $g$ that is not separation $k$-choosable, every proper subgraph of which has average degree at most $2(k-1)$.

\subsection{Proof of Theorem~\ref{thm:main}}\label{sub:proof}

For the proof of Theorem~\ref{thm:main}, we require the construction~\cite[Corollary~1]{FKK14} used in the proof of~\eqref{eqn:FKK}.
Two hypergraphs $H_1$ and $H_2$ on the same vertex set are \emph{nearly disjoint} if every edge of $H_1$ meets every edge of $H_2$ in at most one vertex.
The stability number $\alpha(H)$ of a hypergraph $H$ is the size of a largest subset of vertices of $H$ that does not contain any edge of $H$.

\begin{lemma}[F\"uredi, Kostochka and Kumbhat~\cite{FKK14}]\label{lem:FKK}
Fix $r\ge2$.
There exist two nearly disjoint $r$-uniform hypergraphs $H_1$ and $H_2$ each on vertex set $[4r^4]$ and having $16r^42^r$ edges such that $\alpha(H_1),\alpha(H_2) < 2r^4$.\qed
\end{lemma}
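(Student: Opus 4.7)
The plan is to use a structural construction that makes near-disjointness automatic, and then a probabilistic choice of edges within that structure to control the stability number. I would identify the vertex set $[4r^4]$ with the cells of a $k \times k$ grid for $k = 2r^2$, and insist that every edge of $H_1$ be contained in a single row of the grid and every edge of $H_2$ in a single column. Any row and any column intersect in exactly one cell, so one automatically has $|e_1 \cap e_2| \le 1$ for every $e_1 \in H_1$ and $e_2 \in H_2$, giving near-disjointness for free and avoiding the delicate task of controlling cross-intersections directly.

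To hit the size target $|H_i| = 16 r^4 2^r$, I would sample, independently within each row $R$ for $H_1$, a uniformly random $t$-element subset of $\binom{R}{r}$ with $t := 8 r^2 2^r$, and analogously within each column for $H_2$; this produces $k t = 16 r^4 2^r$ edges in each hypergraph. Since no edge of $H_1$ crosses rows, a stable set $S$ of $H_1$ intersects each row $R$ in a set stable within the row-restricted sub-hypergraph $H_1^R$, so $\alpha(H_1) = \sum_R \alpha(H_1^R)$. It would therefore suffice to show that with positive probability one has $\alpha(H_1^R) < r^2$ for every row of $H_1$ and every column of $H_2$, because that would yield $\alpha(H_i) \le k(r^2 - 1) < 2r^4$.

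The key probabilistic estimate is the following. Fix a row $R$ and an $r^2$-subset $T \subseteq R$; a uniformly random $r$-subset of $R$ lies inside $T$ with probability $\binom{r^2}{r}/\binom{2r^2}{r} = \prod_{i=0}^{r-1}(r^2 - i)/(2r^2 - i) \ge c\,2^{-r}$, where $c$ is an absolute constant (say $c = 1/e$), obtained by rewriting each factor as $\tfrac{1}{2}(1 - i/(2r^2 - i))$ and using $\ln(1-x) \ge -2x$ for $x \in [0,1/2]$ to control the product. Hence $T$ remains stable after all $t$ samples with probability at most $(1 - c\,2^{-r})^t \le e^{-8c r^2}$. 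A union bound over the $\binom{2r^2}{r^2} \le 2^{2r^2}$ candidate $T$'s, over the $2k = 4r^2$ rows and columns, and over the two hypergraphs yields expected total at most $8 r^2 \cdot 2^{2r^2} \cdot e^{-8 c r^2}$, which is $o(1)$ as $r \to \infty$ provided $8c > 2\ln 2$; this holds for $c = 1/e$.

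The main obstacle is precisely the constant-chasing in the exponent above: one has to choose $t$ and the grid side length $k$ so that the probabilistic exponent $8 c r^2$ from sampling strictly beats the combinatorial exponent $2 r^2 \ln 2$ from the union bound, and one has to check $t \le \binom{2r^2}{r}$ so the sampling is well-defined (easy for $r$ sufficiently large). The choices $k = 2r^2$ and $t = 8 r^2 2^r$ appearing in the statement are tuned exactly to make this balance go through.
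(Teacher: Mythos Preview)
The paper does not prove this lemma; it is quoted with a \qed\ and attributed to F\"uredi, Kostochka and Kumbhat, so there is no proof in the paper to compare your attempt against. Your row/column grid construction---forcing $H_1$-edges into rows and $H_2$-edges into columns so that near-disjointness is automatic, and then choosing random $r$-subsets within each line---is essentially the construction used in the cited source, and the probabilistic analysis you give (the lower bound $\binom{r^2}{r}/\binom{2r^2}{r}\ge c\,2^{-r}$, the bound $(1-c\,2^{-r})^{t}$ for a fixed $r^2$-set to stay stable, and the union bound over $\binom{2r^2}{r^2}$ sets and $2k$ lines) is correct.

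There is one genuine gap, which you flag but wave away too quickly. The condition $t\le \binom{2r^2}{r}$, needed for the sampling to make sense, fails at $r=2$: there $t=8r^2 2^r=128$ while $\binom{8}{2}=28$. More to the point, at $r=2$ the whole row-restricted scheme can carry at most $k\binom{k}{2}=8\cdot 28=224$ simple edges, whereas the lemma demands $16r^4 2^r=1024$. So the grid construction cannot deliver the stated edge count for $r=2$ as a simple hypergraph; that case needs a separate (easy, ad hoc) argument, or one must allow multi-edges, which is harmless for the stability bound but not for the edge count as literally stated. Since in this paper the lemma is only ever applied with $r=k=\lceil C\log d\rceil-1$ for $d\ge d_0$ large, your argument covers everything the paper actually uses.
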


We essentially substitute this construction into the proof of Theorem~\ref{thm:Alon}.

\begin{proof}[Proof of Theorem~\ref{thm:main}]
Let $G=(V,E)$ be a bipartite graph of minimum degree at least $d$, where without loss of generality $d\ge d_0$ for some sufficiently large fixed $d_0$. Let $V=V_1\cup V_2$ be the bipartition and suppose that $|V_1| \le |V_2|$.
We assume for a contradiction that $\ch(G) \le k$, where $k = \lceil C\log d \rceil-1$ for some fixed $C>0$ chosen strictly smaller than $(1/2-\eps)/\log 2$ with $\eps>0$.
Let $H_1$ and $H_2$ be the hypergraphs certified by Lemma~\ref{lem:FKK} applied with $r=k$.
Write $W=[4k^4]$ for their common vertex set, and $F_1$ and $F_2$ for their respective edge sets.

As already mentioned, the strategy is the same as in~\cite{Alo00}, where we incorporate the use of $F_1$ and $F_2$.
There are two stages of randomness.  In the first stage we choose a small random vertex subset $A$ of $V_1$ and assign lists from $F_1$ to the vertices of $A$ randomly.  After showing that with positive probability there is a subset $A\subseteq V_1$ for which there are many {\em good} vertices in $V_2$ (to be defined below), we fix such a subset $A$ and an assignment of lists.  In the second stage we assign lists uniformly to the good vertices, and only from $F_2$ to ensure separation of the list assignment. Goodness helps to guarantee in the second stage that with positive probability the remaining vertices cannot be coloured from their (random) lists.

Let $p = 1/\sqrt{d}$.  Note that $p < 1/8$ for $d$ sufficiently large.
Let $A$ be a random subset of $V_1$ with each vertex of $V_1$ chosen to be in $A$ independently at random with probability $p$.
Since $\Exp(|A|) = p |V_1|$, by Markov's inequality we have $\Pr(|A| > 2p |V_1|) \le 1/2$.
We define a random list assignment $L_A$ of $G[A]$ as follows: for each $v\in A$ independently, let $L_A(v)$ be a uniformly random element of $F_1$.
We call a vertex $v\in V_2$ {\em good} if for any $f\in F_1$ there is at least one neighbour $v'$ of $v$ such that $v'\in A$ and $L_A(v') = f$.

The probability that a vertex $v\in V_2$ is not good is the probability that for some $f\in F_1$ there is no neighbour $v'$ of $v$ such that $v'\in A$ and $L_A(v')=f$.  For each fixed $f\in F_1$, since there are at least $d$ neighbours of $v$, the probability that there is no neighbour $v'$ of $v$ such that  $v'\in A$ and $L_A(v')=f$ must by the choice of $C$ be at most
\begin{align*}
(1-p/|F_1|)^d \le \exp(-dp/|F_1|) \le \exp(-\sqrt{d}/(16k^42^k)) \le \exp(-d^\eps)
\end{align*}
for $d$ sufficiently large.
Therefore the probability that there is some set $f\in F_1$ that certifies that $v$ is not good must be at most $16k^42^k\exp(-d^\eps)< 1/4$ for $d$ sufficiently large.
It then follows by Markov's inequality that the number of vertices that are not good is at most $|V_2|/2$ with probability strictly greater than $1/2$.  So there exists a set $A\subseteq V_1$ and a list assignment $L_A$ (with lists from the set $F_1$) such that $|A|\le 2p|V_1|$ and the number of good vertices is at least $|V_2|/2$.  Fix such a set $A$ and list assignment $L_A$ for the remainder of the proof.  Let $A^* \subseteq V_2$ be the set of good vertices.

There are at most $k^{|A|}$ possibilities for an arbitrary (proper) $L_A$-colouring of $G[A]$.
We fix one such colouring $c_A$ and show that with high probability there is a $k$-list-assignment of maximum separation extending $L_A$, such that $c_A$ cannot be extended to a proper list colouring of $G[A\cup A^*]$. We define a random list assignment $L_{A^*}$ of $G[A^*]$ by letting, for each $v\in A^*$ independently, $L_{A^*}(v)$ be a uniformly random element of $F_2$.
It is important to notice that, since $G$ is bipartite and $H_1$ and $H_2$ are nearly disjoint, the resulting $k$-list-assignment is guaranteed to have maximum separation.

Consider a vertex $v\in A^*$.
We define $C_v$ to be the set of colours $c$ such that there is some neighbour $v'$ of $v$ such that $v'\in A$ and $c_A(v')=c$. Recall that $W=[4k^4]$ denotes the common vertex set of $H_1$ and $H_2$. We first observe that, since $v$ is good, $C_v$ meets every member of $F_1$.
Thus $W\setminus C_v$ is a stable set of $H_1$, from which we conclude that $|W\setminus C_v| < 2k^4$.
Moreover,  there must be some $f_v \in F_2$ such that $f_v\subseteq C_v$; otherwise, $C_v$ is a stable set of $H_2$ and so $|C_v| < 2k^4$, which in combination with the previous inequality is a contradiction to $|W| = 4k^4$.
By the definition of $C_v$, if the random assignment results in $L_{A^*}(v) = f_v$, then $v$ cannot be properly coloured from its list.  Since the choice of $L_{A^*}(v)$  is independent, the probability that all vertices of $A^*$ can be coloured from their lists is at most
\begin{align*}
(1-1/|F_2|)^{|V_2|/2}\le \exp(-|V_2|/(2|F_2|)).
\end{align*} 
Note that, due to the choices of $k$, $C$ and $p$ and the fact that $|V_2|\ge|V_1|\ge d$,
\begin{align*}
k^{|A|}\exp(-|V_2|/(2|F_2|))
& \le k^{2p|V_1|}\exp(-|V_2|/(2|F_2|)) \\
& \le k^{2p|V_2|}\exp(-|V_2|/d^{1/2-\eps})
< 1
\end{align*}
for $d$ sufficiently large.
It then follows that with positive probability there is a $k$-list-assignment $L$ of $G[A\cup A^*]$ with maximum separation such that there is no proper $L$-colouring of $G[A\cup A^*]$.
\end{proof}

\subsection{Adaptable choosability}\label{sub:adapt}

Given a graph $G=(V,E)$ and a labelling $\ell: E \to [k]$ of the edges, a (not-necessarily-proper) vertex colouring $c: V \to [k]$ is {\em adapted} to $\ell$ if for every edge $e=uv \in E$ not all of $c(u)$, $c(v)$ and $\ell(e)$ are the same value.
We say that $G$ is {\em adaptable $k$-choosable} if for any $k$-list-assignment $L$ and any labelling $\ell$ of the edges of $G$, there is an $L$-colouring of $G$ that is adapted to $\ell$. The {\em adaptable choosability} $\ch_a(G)$ of $G$ is the least $k$ such that $G$ is adaptable $k$-choosable. Every proper colouring is adapted to any labelling $\ell$, so $\ch(G)\ge \ch_a(G)$ always.
This parameter was proposed by Kostochka and Zhu~\cite{KoZh08}. Molloy and Thron~\cite{MoTh11} proved that the adaptable choosability grows with choosability. Recall~\eqref{eqn:degenerate}. More precisely, they proved the following.

\begin{theorem}[Molloy and Thron~\cite{MoTh11}]\label{thm:MoTh}
There is a constant $C>0$ such that $\ch_a(G) \ge C\log^{1/5} d$ for any graph $G$ with minimum degree $d$.\qed
\end{theorem}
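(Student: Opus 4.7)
The natural route, given what we have just proved, is to show that adaptable choosability dominates separation choosability and then pass to a dense bipartite subgraph so that Theorem~\ref{thm:main} can be invoked. In fact this route yields the stronger bound $\ch_a(G) = \Omega(\log d)$, which a fortiori implies the claimed $\Omega(\log^{1/5} d)$.

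The core step is the inequality $\ch_a(G) \ge \ch_{\sep}(G)$ valid for every graph $G$. Suppose $L$ is a $k$-list-assignment of $G$ with maximum separation that admits no proper $L$-colouring. Define an edge labelling $\ell$ by setting $\ell(uv)$ to be the unique element of $L(u)\cap L(v)$ whenever this intersection is non-empty, and arbitrarily in $\mathbb{Z}^+$ otherwise. Any $L$-colouring $c$ adapted to $\ell$ must then be proper: if $c(u)=c(v)$ on some edge $uv$, the common value would lie in $L(u)\cap L(v)$ and therefore equal $\ell(uv)$, contradicting adaptedness. So no adapted $L$-colouring exists, witnessing $\ch_a(G) > k$, and taking $k=\ch_{\sep}(G)-1$ gives the claimed inequality.

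To finish, given $G$ of minimum degree $d$, I would extract a spanning bipartite subgraph $G'\subseteq G$ with $\delta(G')\ge d/2$ by the classical greedy argument (repeatedly move any vertex more than half of whose neighbours lie on its own side of the current bipartition; this strictly increases the crossing edge count and hence terminates). Adaptable choosability is monotone under spanning subgraphs---one extends any adversarial instance on $G'$ to one on $G$ by assigning arbitrary labels to the extra edges and arbitrary lists to any extra vertices, and then restricts an adapted colouring of $G$---so
\[
\ch_a(G)\ge \ch_a(G')\ge \ch_{\sep}(G').
\]
Theorem~\ref{thm:main} applied to the bipartite graph $G'$ yields $\ch_{\sep}(G')\ge C'\log(d/2)=\Omega(\log d)$, which is much stronger than $C\log^{1/5}d$.

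I expect the only delicate point to be checking that the choice of $\ell(uv)$ on edges with $L(u)\cap L(v)=\emptyset$ does not interfere with the reduction; this is immediate, since on such an edge no $L$-colouring can have $c(u)=c(v)$, let alone $c(u)=c(v)=\ell(uv)$, so adaptedness is automatic. Everything else amounts to standard monotonicity observations and a direct appeal to the previously established Theorem~\ref{thm:main}.
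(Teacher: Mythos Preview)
Your argument is correct and in fact establishes the stronger bound $\ch_a(G)=\Omega(\log d)$. Note, however, that the paper does not itself prove Theorem~\ref{thm:MoTh}: it is merely quoted from Molloy and Thron and marked with a \qed. What the paper \emph{does} do is exactly the route you propose: it proves $\ch_a(G)\ge\ch_{\sep}(G)$ as Proposition~\ref{prop:adapt}, invokes monotonicity of $\ch_a$ under subgraph inclusion together with the existence of a bipartite subgraph of minimum degree $\ge d/2$, and applies Theorem~\ref{thm:main} to obtain Corollary~\ref{cor:adapt}. So your approach coincides with the paper's derivation of the stronger Corollary~\ref{cor:adapt}, which supersedes the cited Theorem~\ref{thm:MoTh}.
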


We observe separation choosability is at most adaptable choosability.
\begin{proposition}\label{prop:adapt}
For any graph $G$, $\ch_a(G) \ge \ch_{\sep}(G)$.
\end{proposition}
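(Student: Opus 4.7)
The plan is to take any list assignment with maximum separation and cook up an edge labelling so that any adapted list colouring is forced to be proper. Then the bound $\ch_a(G)\ge\ch_{\sep}(G)$ falls out directly.

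More precisely, set $k=\ch_a(G)$ and let $L$ be an arbitrary $k$-list-assignment of $G$ with maximum separation. I will define a labelling $\ell:E\to\mathbb{Z}^+$ as follows. For each edge $uv\in E$, the separation condition gives $|L(u)\cap L(v)|\le 1$; if $L(u)\cap L(v)=\{x\}$, set $\ell(uv)=x$, and otherwise (when $L(u)\cap L(v)=\emptyset$) set $\ell(uv)$ to be an arbitrary colour. Since $G$ is adaptable $k$-choosable, there exists an $L$-colouring $c$ that is adapted to $\ell$.

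It remains to check that $c$ is a proper colouring of $G$. Consider any edge $uv$. If $L(u)\cap L(v)=\emptyset$, then $c(u)\in L(u)$ and $c(v)\in L(v)$ forces $c(u)\ne c(v)$ for free. If instead $L(u)\cap L(v)=\{x\}$, then any equality $c(u)=c(v)$ would give a common value lying in both lists, hence equal to $x=\ell(uv)$; but the adaptability of $c$ forbids $c(u)=c(v)=\ell(uv)$, a contradiction. Either way $c(u)\ne c(v)$, so $c$ properly $L$-colours $G$. This shows $G$ is separation $k$-choosable and completes the proof.

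There is essentially no obstacle here: the content is just the observation that maximum separation allows one to record the unique possible ``collision colour'' on each edge as the edge's label, after which adaptability becomes at least as strong as properness. The only mildly subtle point is the case of disjoint lists, which is handled trivially by an arbitrary label because properness on that edge is already automatic.
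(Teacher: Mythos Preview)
Your proof is correct and follows exactly the same approach as the paper: set $k=\ch_a(G)$, take an arbitrary $k$-list-assignment $L$ with maximum separation, label each edge by the unique common colour of its endpoints' lists (arbitrarily if there is none), and observe that any $L$-colouring adapted to this labelling is automatically proper. The paper's proof is slightly terser in the final verification, but the argument is the same.
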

\begin{proof}
Fix $G=(V,E)$ and let $k=\ch_a(G)$. Let $L$ be a $k$-list-assignment of maximum separation. Let $\ell$ be a labelling defined for each $e=uv\in E$ by taking $\ell(e)$ as the unique element of $L(u)\cap L(v)$ if it is nonempty, and arbitrary otherwise. By the choice of $k$, there is guaranteed to be an $L$-colouring $c$ that is adapted to $\ell$. Due to the maximum separation property of $L$ and the definition of $\ell$, the colouring $c$ must be proper.
\end{proof}

By this observation and monotonicity of $\ch_a$ with respect to subgraph inclusion, Theorem~\ref{thm:main} implies the following improvement upon Theorem~\ref{thm:MoTh}.
\begin{corollary}\label{cor:adapt}
There is a constant $C>0$ such that $\ch_a(G) \ge C\log d$ for any graph $G$ with minimum degree $d$.\qed
\end{corollary}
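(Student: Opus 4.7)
The plan is to deduce this corollary from Theorem~\ref{thm:main} and Proposition~\ref{prop:adapt} via a standard reduction to the bipartite case, exactly as hinted at in the one-line justification given in the paper.

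First, given a graph $G$ of minimum degree $d$, I would extract a bipartite subgraph $H \subseteq G$ of minimum degree at least a constant fraction of $d$. A convenient two-step recipe is: take a maximum cut of $G$ to obtain a spanning bipartite subgraph $G'$ retaining at least half of $E(G)$, and therefore of average degree at least $d/2$; then iteratively delete from $G'$ any vertex whose current degree has dropped below $d/4$. A short check shows that deleting a vertex of below-average degree cannot decrease the average degree of the remainder, so the running average stays $\geq d/2$ throughout; in particular the process cannot exhaust the graph, and when it halts every remaining degree is $\geq d/4$. Call the resulting non-empty bipartite subgraph $H$.

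Second, I would apply Theorem~\ref{thm:main} to $H$ to get $\ch_{\sep}(H) \geq C_1 \log(d/4)$ for the constant $C_1 > 0$ supplied by that theorem, and then Proposition~\ref{prop:adapt} immediately upgrades this to $\ch_a(H) \geq \ch_{\sep}(H) \geq C_1 \log(d/4)$. Finally, I would invoke monotonicity of $\ch_a$ under subgraph inclusion: any $k$-list-assignment $L$ and edge-labelling $\ell$ of $H$ admitting no adapted $L$-colouring can be extended to $G$ by choosing lists and labels arbitrarily on the extra vertices and edges, and the restriction of any adapted colouring of $G$ to $H$ would be an adapted $L$-colouring of $H$, a contradiction; hence $\ch_a(G) \geq \ch_a(H)$. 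Absorbing the constant loss in $\log(d/4)$ versus $\log d$ into a smaller constant $C > 0$ (and shrinking $C$ further to cover the finitely many small values of $d$) gives $\ch_a(G) \geq C \log d$.

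I do not foresee any real obstacle: the dense bipartite subgraph extraction is textbook, and the rest of the argument is just bookkeeping on the two ingredients (Theorem~\ref{thm:main} and Proposition~\ref{prop:adapt}) already established in this section. The only subtlety to be mindful of is maintaining the min-degree bound during the peeling step, but this follows from the standard averaging observation above.
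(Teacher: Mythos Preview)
Your proof is correct and follows exactly the paper's one-line justification: pass to a bipartite subgraph of large minimum degree, apply Theorem~\ref{thm:main} and Proposition~\ref{prop:adapt} there, and lift back via the monotonicity of $\ch_a$ under subgraph inclusion. One small simplification: the peeling step is unnecessary, since in a maximum cut every vertex already has at least half of its neighbours on the other side (otherwise moving it would enlarge the cut), so $G'$ itself has minimum degree at least $d/2$.
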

\noindent
This is sharp up to the choice of constant $C$ by~\eqref{eqn:ERT80}.
By~\eqref{eqn:degenerate}, it follows for some $C>0$ that $\ch_a(G) \ge C \log \ch(G)$ for any graph $G$. It has not though been ruled out that $\ch_a(G)$ is $\Omega(\sqrt{\ch(G)})$ in general.

\section{Dense bipartite induced subgraphs}\label{sec:cliquebip}

In this section, we focus mostly on Conjecture~\ref{conj:trianglebip}. That is because we find the triangle-free case the most elegant setting for this class of problems. Furthermore, the techniques carry over routinely to larger forbidden cliques; we briefly discuss this at the end of the section.

As every graph of average degree $d$ contains a (bipartite) subgraph of minimum degree at least $d/2$, and
since we do not worry ourselves about constant factors, we work with average or minimum degree interchangeably.

A first intuition one might have upon meeting this class of problems is that a good proper colouring in a graph should be helpful to find a dense bipartite induced subgraph. If there are few colour classes in such a colouring, then by the pigeonhole principle one expects some pair of the colour classes to have a relatively large number of edges between them. Although this idea in itself does not quite lead directly to the desired induced subgraph (and thus to Theorem~\ref{thm:integral}), this intuition can be formalised and also strengthened as we now show.

Given a graph $G=(V,E)$, let us say that a probability distribution $\mathcal{S}$ over the stable sets of $G$ satisfies property ${\rm Q}^*_r$ if $\Pr(v \in \mathbf{S})\ge r$ for every vertex $v\in V$ and $\mathbf{S}$ a random stable set taken according to $\mathcal{S}$.
Recall that the \emph{fractional chromatic number} of $G$ may be defined as the smallest $k$ such that there is a probability distribution over the stable sets of $G$ that satisfies property ${\rm Q}^*_{1/k}$.

\begin{theorem}\label{thm:fractional}
Any graph with fractional chromatic number $k$ and average degree $d$ has a bipartite induced subgraph of average degree at least $d/k$.
\end{theorem}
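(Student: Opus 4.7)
The plan is to apply a short probabilistic argument: sample two independent stable sets from a fractional $k$-colouring of $G$ and look at the (automatically bipartite) induced subgraph they span on their symmetric difference.

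By the definition given just before the theorem, there is a probability distribution $\mathcal{S}$ on stable sets of $G$ with $\Pr(v\in\mathbf{S})\ge 1/k$ for every vertex $v$. I would begin with a normalisation step: delete each vertex $v$ from a sample independently with probability $1-(1/k)/\Pr(v\in\mathbf{S})$. The result is still supported on stable sets (since a subset of a stable set is stable), and now $\Pr(v\in\mathbf{S})=1/k$ exactly for every $v$. This normalisation is the one (mild) subtlety of the argument: without it the upcoming upper bound on $\Exp[|V(H)|]$ need not hold, because $p\mapsto p(1-p)$ is not monotone on $[1/k,1]$.

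Then I would draw $\mathbf{S}_1,\mathbf{S}_2$ independently from this normalised distribution and set
$$H := G\bigl[(\mathbf{S}_1\setminus\mathbf{S}_2)\cup(\mathbf{S}_2\setminus\mathbf{S}_1)\bigr].$$
The graph $H$ is bipartite because each of its two parts is contained in a stable set. For any $v\in V$,
$$\Pr(v\in V(H)) = 2\cdot\tfrac{1}{k}\bigl(1-\tfrac{1}{k}\bigr) = \frac{2(k-1)}{k^2},$$
so $\Exp[|V(H)|]=2(k-1)|V|/k^2$. For any edge $uv\in E$, its endpoints can never lie in a common $\mathbf{S}_i$, so $uv\in E(H)$ precisely when one endpoint lies in $\mathbf{S}_1$ and the other in $\mathbf{S}_2$, an event of probability $2/k^2$; hence $\Exp[|E(H)|]=2|E|/k^2$.

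To conclude, I would consider the random variable $R := 2|E(H)|-(d/k)|V(H)|$. By linearity of expectation and the identity $d|V|=2|E|$,
$$\Exp[R] = \frac{4|E|}{k^2}-\frac{d}{k}\cdot\frac{2(k-1)|V|}{k^2} = \frac{4|E|}{k^3} > 0$$
(the theorem being vacuous if $|E|=0$). Therefore some realisation satisfies $R>0$, which forces $V(H)\ne\varnothing$ and $2|E(H)|/|V(H)| > d/k$, yielding a bipartite induced subgraph of the required average degree. There is no real obstacle here beyond the normalisation remark above; the calculation is simply a one-line second moment-style check after the symmetric-difference construction supplies bipartiteness for free.
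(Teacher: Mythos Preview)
Your argument is correct and follows essentially the same approach as the paper: normalise so that each vertex is covered with probability exactly $1/k$, sample two independent stable sets, and compare the expected edge count of the resulting bipartite induced subgraph against $(d/k)$ times its expected vertex count. The only cosmetic differences are that the paper works with $G[\mathbf{S}_1\cup\mathbf{S}_2]$ and bounds $|V(H)|$ by $|\mathbf{S}_1|+|\mathbf{S}_2|$ before discarding the intersection, whereas you take the symmetric difference from the outset and compute $\Exp[|V(H)|]$ exactly (which incidentally yields a strict inequality).
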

\noindent
By considering the distribution that takes a colour class of a proper colouring uniformly at random, it is easily seen that the fractional is bounded by the usual chromatic number, and so this immediately implies Theorem~\ref{thm:integral}.
As we will see in Subsection~\ref{sub:erdosrenyi}, this lower bound cannot in general be improved by more than a constant factor.

\begin{proof}[Proof of Theorem~\ref{thm:fractional}]
Let $G=(V,E)$ be a graph and let $\mathcal{S}$ be a probability distribution over the stable sets of $G$ with property ${\rm Q}^*_{1/k}$.
Without loss of generality, we may assume that $\Pr(v \in \mathbf{S})= 1/k$ for every vertex $v\in V$ and $\mathbf{S}$ a random stable set taken according to $\mathcal{S}$.
Let $\mathbf{S_1}$ and $\mathbf{S_2}$ be two stable sets taken independently at random according to $\mathcal{S}$. 
Note that $\Exp(|\mathbf{S_1}|) = \Exp(|\mathbf{S_2}|) = n/k$ by the assumption on $\mathcal{S}$.
Moreover, for any edge $e=uv\in E$, the probability that $e$ is in the induced subgraph $G[\mathbf{S_1}\cup \mathbf{S_2}]$ is
\begin{align*}
\Pr(u\in \mathbf{S_1}) \Pr(v\in \mathbf{S_2}) + \Pr(u\in \mathbf{S_2}) \Pr(v\in \mathbf{S_1}) = \frac{2}{k^2}.
\end{align*}
 By linearity of expectation, we have that
\begin{align*}
\Exp\left(|E(G[\mathbf{S_1}\cup \mathbf{S_2}])|-(|\mathbf{S_1}|+|\mathbf{S_2}|)\frac{d}{2k}\right) = \frac{2|E|}{k^2} - \frac{nd}{k^2} = 0.
\end{align*}
The probabilistic method guarantees stable sets $S_1$ and $S_2$ of $G$ such that
\begin{align*}
|E(G[S_1\cup S_2])|\ge (|S_1|+|S_2|)\frac{d}{2k}.
\end{align*}
Discarding the vertices of $S_1\cap S_2$ (if any) yields a bipartite induced subgraph of average degree at least $d/k$.
\end{proof}

\subsection{Subcases of Conjecture~\ref{conj:trianglebip}}\label{sub:evidence}

In this subsection, we discuss some special cases of Conjecture~\ref{conj:trianglebip}. These observations are mainly consequences of Theorem~\ref{thm:fractional} above.

First let us note that, by an induction on the number of vertices, we may assume in Conjecture~\ref{conj:trianglebip} that any proper subgraph of the triangle-free graph has minimum degree less than $d$. (Note that the base case is implied by Tur\'an's theorem.) So the graph may be assumed to be $d$-degenerate.
By Theorem~\ref{thm:fractional}, Conjecture~\ref{conj:trianglebip} is thus implied by a recent conjecture of Harris.

\begin{conjecture}[Harris~\cite{Har16+}]\label{conj:harris}
There is some $C>0$ such that the fractional chromatic number of any $d$-degenerate triangle-free graph is at most $Cd/\log d$.
\end{conjecture}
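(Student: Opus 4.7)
My plan is to approach Harris's conjecture via the LP-dual reformulation of the fractional chromatic number, combined with a weighted analogue of Shearer's independent-set bound for triangle-free graphs. Write $\chi_f(G)$ for the fractional chromatic number of $G$. By LP duality, $\chi_f(G) \leq Cd/\log d$ is equivalent to the assertion that for every non-negative weighting $w : V(G) \to \mathbb{R}_{\geq 0}$ there is an independent set $I \subseteq V(G)$ with $w(I) \geq w(V) \log d/(Cd)$. In the constant-weight case this is already implied by Shearer's classical lower bound $\alpha(G) = \Omega(n \log \bar d/\bar d)$ for triangle-free $G$ of average degree $\bar d$, since $d$-degeneracy forces $\bar d \le 2d$.

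The core technical task is to establish the corresponding weighted inequality. I would first try to adapt Shearer's entropy-method proof directly: draw a random total ordering $\pi$ of $V(G)$ under a $w$-biased distribution, set $I_\pi = \{v : \pi(v) < \pi(u) \text{ for all } u \in N(v)\}$, and lower bound $\Exp[w(I_\pi)]$ by exploiting the triangle-free property that each $N(v)$ is independent, which lets one analyse the relative orderings within each $N[v]$ cleanly. A complementary approach would reduce to bounded maximum degree: since $\sum_v d(v) \le 2dn$, the set of vertices with $d(v) > Cd$ carries a $O(1/C)$ fraction of the total uniform mass, so removing them and applying a fractional form of Molloy's bound $\chi \le (1+o(1))\Delta/\log \Delta$ to the remainder loses only a constant factor in the unweighted case, and hopefully also in the weighted case after a careful redistribution of $w$.

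The main obstacle is that either strategy leaves a residual graph that is itself $d$-degenerate and triangle-free, so a naive recursion pays an extra $\log n$ factor that destroys the bound. To circumvent this, the plan is to seek an amortised single-pass argument in the spirit of the entropy-compression method that Molloy used for the list-chromatic number of triangle-free graphs, but tracking the degeneracy rather than the maximum degree through the random process. Concretely, one would want a random partial colouring that, in each of $\Theta(\log d)$ rounds, assigns one of $\Theta(d/\log d)$ colours to a constant fraction of the currently uncoloured vertices while exploiting that every back-neighbourhood in a triangle-free graph is already independent. The hardest point will be to prove the required concentration estimate when unbounded-degree vertices are permitted and can interact in complicated ways with the random colouring: in the bounded-$\Delta$ setting this is handled by Talagrand-type inequalities, which lose their teeth once degrees are unbounded, so a genuinely new concentration or local-lemma argument is likely needed.
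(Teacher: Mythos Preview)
The statement you are attempting to prove is labelled as a \emph{conjecture} in the paper (attributed to Harris), and the paper supplies no proof of it; it is introduced only as a hypothesis which, via Theorem~\ref{thm:fractional}, would imply Conjecture~\ref{conj:trianglebip}. So there is no ``paper's own proof'' to compare against.

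As for the content of your proposal: it is a research outline rather than a proof, and you yourself identify the essential gap. The LP-dual reformulation is correct, and the unweighted case does follow from Shearer's bound combined with $\bar d \le 2d$, but the passage to arbitrary weights is exactly the unresolved difficulty. Your first approach (a $w$-biased random ordering in Shearer's argument) does not obviously work: Shearer's proof exploits that each vertex is first in $N[v]$ with probability $1/(d(v)+1)$ under the uniform ordering, and biasing the ordering by $w$ destroys this clean reciprocal-degree structure without an evident replacement. Your second approach (strip high-degree vertices and apply Molloy's $\Delta/\log\Delta$ bound) runs into precisely the recursion blow-up you flag, and the proposed ``amortised single-pass'' fix via entropy compression is speculative: you note that the concentration tools used in the bounded-$\Delta$ setting fail here and that ``a genuinely new concentration or local-lemma argument is likely needed''. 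That is an honest assessment, but it means what you have is a plan of attack on an open problem, not a proof.
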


Next we observe that, due to classic results on the chromatic number of triangle-free graphs and on off-diagonal Ramsey numbers, we may also assume in Conjecture~\ref{conj:trianglebip} that the maximum degree of the graph is large with respect to $d$ and that the minimum degree $d$ is not too large with respect to the number of vertices.

\begin{theorem}\label{thm:Joh96a}
There is a constant $C>0$ such that any triangle-free graph with average degree $d$ and maximum degree $\Delta$ contains a bipartite induced subgraph of
average degree at least $
\frac{d}{C\Delta}\log \Delta$.
\end{theorem}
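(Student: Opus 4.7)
The plan is to combine Theorem~\ref{thm:fractional} with the classical bound on the (fractional) chromatic number of triangle-free graphs due to Johansson. Specifically, Johansson's theorem asserts that there is a constant $C'>0$ such that every triangle-free graph $G$ with maximum degree $\Delta$ satisfies $\chi(G) \le C'\Delta/\log \Delta$ (and in fact the same bound, with a possibly different constant, holds for the fractional chromatic number via Shearer's lower bound on the independence number of triangle-free graphs).

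Given this, the argument is essentially one line. Let $G$ be triangle-free with average degree $d$ and maximum degree $\Delta$. By Johansson's theorem, the fractional chromatic number $k := \chi_f(G)$ satisfies $k \le C'\Delta/\log \Delta$. Applying Theorem~\ref{thm:fractional} to $G$ yields a bipartite induced subgraph of average degree at least
\begin{align*}
\frac{d}{k} \ge \frac{d \log \Delta}{C'\Delta}.
\end{align*}
Setting $C = C'$ completes the proof.

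The only minor issue to check is that $\log \Delta$ is meaningful, i.e.\ that $\Delta$ is large enough that Johansson's bound is nontrivial. For bounded $\Delta$ the theorem is trivial by choosing $C$ sufficiently large, since any single edge already gives a bipartite induced subgraph of average degree $1$, and the claimed quantity $\frac{d}{C\Delta}\log \Delta$ is bounded by a constant when $\Delta$ is bounded (and the inequality is vacuous when it is nonpositive). So no serious obstacle arises; the work has been done by Theorem~\ref{thm:fractional} on one hand and Johansson's theorem on the other, and the present statement is simply their composition.
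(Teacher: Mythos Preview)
Your proof is correct and follows essentially the same approach as the paper: both invoke Johansson's $O(\Delta/\log\Delta)$ bound on the (fractional) chromatic number of triangle-free graphs and then apply Theorem~\ref{thm:fractional}. The only cosmetic difference is that the paper cites the chromatic number bound directly (which of course also bounds the fractional chromatic number), while you additionally mention Shearer's route to the fractional bound and explicitly dispose of the small-$\Delta$ case.
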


\begin{proof}
Let $G$ be a triangle-free graph with average degree $d$ and maximum degree $\Delta$.
A result of Johansson~\cite{Joh96a}, recently improved significantly by Molloy~\cite{Mol17}, says that there is a constant $C>0$ such that any triangle-free graph with maximum degree $\Delta$, and thus $G$, has chromatic number at most $C\Delta/\log \Delta$.
The result now follows from Theorem~\ref{thm:fractional}.
\end{proof}

\begin{theorem}\label{thm:AKS80}
There is a constant $C_1$ such that any triangle-free graph on $n$ vertices with average degree $d\ge Cn^{2/3}\sqrt{\log n}$ contains a bipartite induced subgraph of average degree at least $\frac{CC_1}2\log n$ for all $n$ large enough.
\end{theorem}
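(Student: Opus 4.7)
The plan is to apply Theorem~\ref{thm:fractional} together with a classical chromatic number bound for triangle-free graphs on $n$ vertices due to Ajtai, Komlós and Szemerédi. Specifically, as a consequence of their off-diagonal Ramsey bound $R(3,k)=O(k^2/\log k)$, every triangle-free graph on $n$ vertices has independence number $\Omega(\sqrt{n\log n})$; iterating this estimate (removing a largest stable set at each step) yields a constant $C_2>0$ such that every triangle-free graph on $n$ vertices has chromatic number at most $C_2\sqrt{n/\log n}$, for $n$ sufficiently large.

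Given a triangle-free graph $G$ on $n$ vertices with average degree $d\ge Cn^{2/3}\sqrt{\log n}$, I would first invoke this bound to obtain that the fractional chromatic number of $G$ is at most $C_2\sqrt{n/\log n}$. Applying Theorem~\ref{thm:fractional} then produces a bipartite induced subgraph of $G$ of average degree at least
\[
\frac{d\sqrt{\log n}}{C_2\sqrt{n}} \;\ge\; \frac{Cn^{2/3}\log n}{C_2\sqrt{n}} \;=\; \frac{C}{C_2}\,n^{1/6}\log n.
\]
Taking $C_1 := 1/C_2$ (a universal constant, independent of $C$), the trivial inequality $n^{1/6}\ge 1/2$ shows the right-hand side is bounded below by $\tfrac{CC_1}{2}\log n$, as required.

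There is essentially no obstacle: the theorem is a one-line deduction from Theorem~\ref{thm:fractional} once the AKS chromatic number bound is in hand. The main conceptual point is that the hypothesis $d\ge Cn^{2/3}\sqrt{\log n}$ forces $\log d=\Theta(\log n)$, so that the conclusion $\Omega(\log n)$ matches the $\Omega(\log d)$ prediction of Conjecture~\ref{conj:trianglebip}. In fact, the computation above leaves an $n^{1/6}$ surplus, indicating that this strategy already succeeds for $d$ of order $\sqrt{n\log n}$; the slightly stronger stated threshold just keeps the asymptotic statement clean.
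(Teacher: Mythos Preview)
Your argument is correct and uses the same two ingredients as the paper --- the Ajtai--Koml\'os--Szemer\'edi stable-set bound for triangle-free graphs, together with Theorem~\ref{thm:fractional} --- but the execution differs. The paper iterates the stable-set extraction only while at least $n^{2/3}$ vertices remain, so that every extracted set has the fixed guaranteed size $C_1 n^{1/3}\sqrt{\log n}$; the union $H$ of these sets then has chromatic number at most $n^{2/3}/(C_1\sqrt{\log n})$, and after checking that $H$ still has average degree $\Omega(n^{2/3}\sqrt{\log n})$ one applies Theorem~\ref{thm:fractional} to $H$. You instead iterate all the way down, quoting the standard consequence $\chi(G)=O(\sqrt{n/\log n})$ for triangle-free $G$, and apply Theorem~\ref{thm:fractional} directly to $G$. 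Your route is cleaner --- no auxiliary subgraph $H$ or edge-count estimate is needed --- and, as you note, it leaves an $n^{1/6}$ surplus, so in fact works already for $d=\Omega(\sqrt{n\log n})$. The paper's truncated iteration has the minor convenience that the colour count is a single division rather than a geometric-sum argument, but this is purely cosmetic.
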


\begin{proof}
Let $G$ be a triangle-free graph on $n$ vertices with average degree $d\ge Cn^{2/3}\sqrt{\log n}$.
By a classic result of Ajtai, Koml\'os and Szemer\'edi~\cite{AKS80}, the off-diagonal Ramsey numbers $R(3,\ell)$ satisfy $R(3,\ell) = O(\ell^2/\log \ell)$ as $\ell\to \infty$.
Thus, for some $C_1>0$, every induced subgraph of $G$ with at least $n^{2/3}$ vertices contains a stable set of size at least $C_1n^{1/3}\sqrt{\log n}$. We now remove such stable sets (of size at least $C_1n^{1/3}\sqrt{\log n}$) until fewer than $n^{2/3}$ vertices remain in $G$. Note that the subgraph $H$ of $G$ induced by the union of these stable sets has chromatic number at most 
\[\frac{n}{C_1n^{1/3}\sqrt{\log n}} \le \frac{n^{2/3}}{C_1\sqrt{\log n}}.\]
For large enough $n$, the number of edges in $H$ is at least
\[\frac{nd}2-n \cdot n^{2/3} \ge \frac{C}{4} n^{5/3}\sqrt{\log n},\]
and so $H$ has average degree at least $\frac{C}{2} n^{2/3}\sqrt{\log n}$. 
By Theorem~\ref{thm:fractional}, $G$ has a bipartite induced subgraph of average degree at least
$\frac{CC_1}2\log n$.
\end{proof}

In summary, Theorem~\ref{thm:fractional} and the discussion above imply that in Conjecture~\ref{conj:trianglebip} it suffices to consider, for any $\eps>0$, a triangle-free graph $G$ on $n$ vertices of average degree $d$, where $d < \eps n^{2/3}\sqrt{\log n}$, such that $G$ is $d$-degenerate and contains a vertex of degree more than $d/\eps$. Moreover, it suffices to show in these circumstances that the fractional chromatic number of $G$ is at most $Cd/\log d$ for some $C>0$.

\subsection{Dense semi-bipartite subgraphs}\label{sub:semi}

Instead of a dense bipartite \emph{induced subgraph}, we might be happy with a dense bipartite \emph{subgraph} where we only demand (at least) one of the parts induces a stable set.
Given a graph $G=(V,E)$, let us call an induced subgraph $G'=(V',E')$ of $G$ {\em semi-bipartite}
 if it admits a partition $V'=V_1\cup V_2$ such that $V_1$ is a stable set of $G$, and define the average degree of $G'$ with respect to the semi-bipartition as the average degree of the bipartite subgraph $G[V_1,V_2]$ between $V_1$ and $V_2$ (and so we ignore any edges in $V_2$).

In this subsection, we prove the following.

\begin{theorem}\label{thm:aksrev}
Any triangle-free graph of minimum degree $d\ge 1$
contains a semi-bipartite induced subgraph of average degree at least
$\tfrac12\log d$.
\end{theorem}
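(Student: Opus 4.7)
\medskip\noindent \textbf{Proof plan.}
The plan is to find a stable set $V_1 \subseteq V(G)$ that is \emph{heavy}, in the sense that $\sum_{v \in V_1} d(v) \geq \tfrac12 n \log d$. With such a $V_1$ in hand, setting $V_2 := V(G) \setminus V_1$ completes the argument: since $V_1$ is stable and $G$ is triangle-free, every edge of $G$ incident to $V_1$ crosses to $V_2$, so $e(V_1,V_2) = \sum_{v \in V_1} d(v) \geq \tfrac12 n \log d$, and hence the bipartite graph $G[V_1,V_2]$ on $n$ vertices has average degree $2 e(V_1,V_2)/n \geq \log d$, comfortably better than what is claimed.

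To produce the heavy stable set, I would invoke a local form of Shearer's independence-number bound for triangle-free graphs. Writing
$$
f(d) \;=\; \frac{d \log d - d + 1}{(d-1)^2} \qquad (\text{with } f(0)=1,\; f(1)=1/2)
$$
for the standard Shearer function, the refined argument produces a random stable set $\mathbf{S} \subseteq V(G)$ satisfying $\Pr(v \in \mathbf{S}) \geq f(d(v))$ for every vertex $v$. Linearity of expectation then gives
$$
\Exp\Bigl[\sum_{v \in \mathbf{S}} d(v)\Bigr] \;=\; \sum_v d(v)\,\Pr(v \in \mathbf{S}) \;\geq\; \sum_v d(v)\, f(d(v)).
$$
A short analytic check (reducing to the inequality $(d+1)\log d \geq 2(d-1)$) shows that $d \mapsto d \cdot f(d)$ is non-decreasing on integers $d \geq 1$ and satisfies $d f(d) \geq \tfrac12 \log d$, with $d f(d) \sim \log d$ as $d \to \infty$. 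Combined with the hypothesis $d(v) \geq d$ for every $v$, this bounds the expectation by $\tfrac12 n \log d$, and one then fixes any realisation of $\mathbf{S}$ attaining at least the expectation to obtain the desired $V_1$.

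The delicate point, and the main obstacle I anticipate, is the \emph{local} form of the Shearer bound --- the per-vertex inequality $\Pr(v \in \mathbf{S}) \geq f(d(v))$ --- as opposed to the weaker aggregate inequality $\Exp[|\mathbf{S}|] \geq \sum_v f(d(v))$. The aggregate form alone is not enough for us, because the weighted sum $\Exp[\sum_{v \in \mathbf{S}} d(v)]$ could in principle be dominated by low-degree vertices and fall short of $\tfrac12 n \log d$. One needs the random stable set itself (the one underlying the classical bound $R(3,k) = O(k^2/\log k)$) to be built in such a way that every vertex achieves the local probability bound; once this is in place, the remaining work is the elementary calculation above.
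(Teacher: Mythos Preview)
Your high-level framework matches the paper's exactly: pick a random stable set $\mathbf{S}$, show that $\Exp\bigl[\sum_{v\in\mathbf{S}} d(v)\bigr]$ is at least a constant times $n\log d$, and then take $V_1=\mathbf{S}$, $V_2=V\setminus\mathbf{S}$. (A minor remark: triangle-freeness plays no role in the sentence ``every edge incident to $V_1$ crosses to $V_2$''; that holds simply because $V_1$ is stable. Triangle-freeness enters only through whatever Shearer-type bound you invoke.)

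The gap is precisely where you flag it, and it is genuine. The per-vertex inequality $\Pr(v\in\mathbf{S})\ge f(d(v))$ is not a quotable result, and for the natural candidate distribution it is \emph{false}. The paper works with the uniformly random stable set (this is also the distribution behind the Alon--Spencer proof of the Ajtai--Koml\'os--Szemer\'edi bound you allude to), and for that distribution consider the star $K_{1,n}$: there are $2^n+1$ stable sets and the centre lies in exactly one of them, so $\Pr(\text{centre}\in\mathbf{S})=1/(2^n+1)$, far below $f(n)\sim(\log n)/n$. Whether \emph{some} distribution achieves the per-vertex bound for every triangle-free graph is, by LP duality, equivalent to a weighted Shearer theorem; you have not established this, and without it the plan does not go through.

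The paper sidesteps the issue entirely. Still with $\mathbf{S}$ uniform, it sets $X_v = d(v)\,\mathbf{1}[v\in\mathbf{S}] + |N(v)\cap\mathbf{S}|$ and uses the identity $\sum_v X_v = 2\sum_{v\in\mathbf{S}} d(v)$. It then proves $\Exp[X_v]\ge \tfrac12\log d(v)$ for each $v$ by conditioning on $\mathbf{S}\cap(V\setminus N[v])$: if $k$ neighbours of $v$ remain available, $\mathbf{S}$ is uniform over $2^k+1$ extensions, and one checks $(d(v)+k2^{k-1})/(2^k+1)\ge\tfrac12\log d(v)$. The point is that when $\Pr(v\in\mathbf{S})$ is tiny---as for the star's centre---the term $|N(v)\cap\mathbf{S}|$ is large and compensates, so $X_v$ is controlled even though neither summand is individually. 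This gives $\Exp\bigl[\sum_{v\in\mathbf{S}} d(v)\bigr]\ge\tfrac14\,n\log d$, a factor $2$ short of what your plan aimed for but with a complete and elementary argument.
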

\noindent
As we will show in Subsection~\ref{sub:erdosrenyi}, this is sharp up to a constant factor.

Near the end of the book of Alon and Spencer~\cite[p.~321--2]{AlSp08}, there is a proof of a result of Ajtai, Koml\'os and Szemer\'edi~\cite{AKS81} that any $n$-vertex triangle-free graph of maximum degree $\Delta$ has a stable set of size at least $n\log \Delta/(8\Delta)$. That proof heavily inspired the proof of the following result. This result directly implies Theorem~\ref{thm:aksrev}.

\begin{lemma}\label{lem:aksrev}
Let $G$ be a triangle-free graph of minimum degree $d\ge 1$, and $\mathbf{S}$ a random stable set of $G$ chosen uniformly. Then $\Exp(\sum_{v\in \mathbf{S}}d(v))\ge \tfrac14 \sum_{v\in G} \log d(v)$.
\end{lemma}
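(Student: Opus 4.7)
The plan is to mimic the Alon--Spencer proof of the Ajtai--Koml\'os--Szemer\'edi theorem alluded to just above the lemma; the local argument for the independence number carries over essentially verbatim to the degree-weighted statement.

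First, a simple double count: since $\sum_{v\in V} |N(v)\cap \mathbf{S}| = \sum_{u\in \mathbf{S}} d(u)$, we have
$$2\,\Exp\!\Big[\sum_{v\in\mathbf{S}} d(v)\Big] \;=\; \sum_{v\in V} \Exp\!\big[d(v)\,\mathbf{1}_{v\in\mathbf{S}} + |N(v)\cap\mathbf{S}|\big].$$
By linearity it therefore suffices to establish the local bound
$$\Exp\!\big[d(v)\,\mathbf{1}_{v\in\mathbf{S}} + |N(v)\cap\mathbf{S}|\big] \;\ge\; \tfrac12 \log d(v) \qquad(\star)$$
for every vertex $v$.

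To prove $(\star)$, I would use the principle of deferred decisions. Write $d=d(v)$ and $N[v]=N(v)\cup\{v\}$; reveal $\mathbf{S}$ in two stages, first $T := \mathbf{S}\cap(V\setminus N[v])$, then $\mathbf{S}\cap N[v]$. Here triangle-freeness is used crucially: $N(v)$ is itself a stable set, and so the stable sets of $G$ whose restriction to $V\setminus N[v]$ equals $T$ are precisely $T\cup\{v\}$ together with $T\cup X$ for each of the $2^{|W|}$ subsets $X$ of $W:=N(v)\setminus N(T)$, the set of neighbours of $v$ that are still ``available''. Since $\mathbf{S}$ is uniform over the stable sets of $G$, these $1+2^{|W|}$ extensions are equiprobable conditional on $T$, and a direct count gives
$$\Exp\!\big[d\,\mathbf{1}_{v\in\mathbf{S}} + |N(v)\cap\mathbf{S}| \,\big|\, T\big] \;=\; \frac{d + w\cdot 2^{w-1}}{1+2^w} \;=:\; \phi(w),$$
where $w = |W|$.

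The remaining step, and the only genuinely calculational one, is the pointwise inequality $\phi(w)\ge \tfrac12 \log d$ for every integer $w\ge 0$. The two easy regimes are the extremes: for $w$ much smaller than $\log d$ the first term $d/(1+2^w)$ dominates, while for $w$ much larger the second term $\sim w/2$ dominates. I expect the only minor obstacle to be the crossover regime $w\approx\log d$, where neither term alone is large enough; taking $\log=\log_2$ reduces the required inequality to $2d-\log_2 d \ge (\log_2 d - w)\cdot 2^w$ on $0\le w<\log_2 d$, and the right-hand side is maximised at $w=\log_2 d - 1/\ln 2$ with value $d/(e\ln 2)\approx 0.53\,d$, comfortably below $2d-\log_2 d$ for every $d\ge 1$. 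Averaging $\phi(w)\ge \tfrac12 \log d$ over $T$ yields $(\star)$, and summing $(\star)$ over $v\in V$ then gives the lemma.
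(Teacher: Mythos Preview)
Your proof is correct and follows essentially the same approach as the paper's: the same local quantity $X_v = d(v)\mathbf{1}_{v\in\mathbf{S}} + |N(v)\cap\mathbf{S}|$, the same conditioning on $T = \mathbf{S}\setminus N[v]$, the same conditional expectation $\phi(w) = (d + w2^{w-1})/(1+2^w)$, and the same reduction to the pointwise inequality $\phi(w)\ge\tfrac12\log d$. Your verification of that inequality via the rearrangement $2d-\log_2 d \ge (\log_2 d - w)2^w$ and the global maximum $d/(e\ln 2)$ is in fact tidier than the paper's appendix argument (and, since you take $\log=\log_2$, slightly stronger; the paper remarks that this sharpening is possible asymptotically).
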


\begin{proof}
For each vertex $v$, let $X_v=d(v)\cdot |\{v\}\cap \mathbf{S}|+|N(v)\cap \mathbf{S}|$.
We claim that
$\Exp(X_v)\ge \tfrac12\log d(v)$.

To see that this claim holds, consider any stable set $T$ of $G-N[v]$ (where $N[v]$ denotes the closed neighborhood of $v$ in $G$). It is enough to prove that for each choice of $T$ as above, $\Exp(X_v\,|\, \mathbf{S}\cap (V(G)-N[v])=T)\ge \tfrac12\log d(v)$. Given a stable set $T$ as above, let $k$ be the number of neighbors of $v$ with no neighbor in $T$. Conditioning on $\mathbf{S}\cap (V(G)-N[v])=T$, there are precisely $2^{k}+1$ equally likely possibilities for $\mathbf{S}$ ($T$ together with $v$, or $T$ together with one of the $2^{k}$ subsets of the set of neighbours of $v$ that have no neighbour in $T$).
It follows that $\Exp(X_v\,|\, \mathbf{S}\cap (V(G)-N[v])=T)=\tfrac1{2^k+1}(d(v)+k2^{k-1})$. 
It can be verified that this last quantity is at least $\tfrac12\log d(v)$ for any $d(v)\ge 1$ and $k\ge 0$; more details are given in the appendix. This proves the claim.

Note that $\sum_{v\in G} X_v = 2\sum_{v \in S} d(v)$. So by linearity of expectation and the claim above $\Exp(\sum_{v \in S} d(v)) = \tfrac12 \sum_{v\in G} \Exp(X_v) \ge \tfrac14 \sum_{v\in G} \log d(v)$.
\end{proof}

\begin{proof}[Proof of Theorem~\ref{thm:aksrev}]
Let $G$ be an $n$-vertex triangle-free graph of minimum degree $d\ge 1$.
Let $\mathbf{S}$ be a stable set of $G$ chosen uniformly at random. By Lemma~\ref{lem:aksrev}, the expected number of edges between $\mathbf{S}$ and its complement is at least $\frac14 n\log d$, and thus the expected average degree of the corresponding semi-bipartite subgraph is at least $\tfrac12\log d$. This proves the existence of the desired semi-bipartite induced subgraph.
\end{proof}

Conjecture~\ref{conj:trianglebip} holds if we can find a distribution $\mathcal{S}$ over the stable sets of the triangle-free graph $G$ that satisfies property ${\rm Q}^*_{\log d/Cd}$. From the proof of Lemma~\ref{lem:aksrev}, it is possible to find a probability distribution $\mathcal{S}$ over the stable sets of $G$ such that $\Exp(|N(v)\cap \mathbf{S}|)\ge \tfrac18 \log d$ or $\Pr(v \in \mathbf{S})\ge \log d/4d$ for every vertex $v$ and $\mathbf{S}$ a random stable set taken according to $\mathcal{S}$. Unfortunately this property does not seem to be powerful enough to prove Conjecture~\ref{conj:trianglebip}.

\subsection{Upper bounds}\label{sub:erdosrenyi}

In this subsection, we prove that Conjecture~\ref{conj:trianglebip} if true would be sharp up to a constant factor.

\begin{theorem}\label{thm:trianglebip}
There are constants $C,C',C''>0$ such that for every large enough $n$ there is a triangle-free graph on $n$ vertices with all degrees between $C'n^{1/3}$ and $C''n^{1/3}$ that contains no semi-bipartite induced subgraph of average degree at least $C\log n$.
\end{theorem}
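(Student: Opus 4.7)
The construction is the standard binomial random graph modified to be triangle-free. Set $p = c\,n^{-2/3}$ for a well-chosen constant $c>0$, sample $G_0 \sim G(n,p)$, and obtain $G$ by deleting one edge from each triangle of $G_0$. By a Chernoff estimate and union bound, w.h.p.\ every vertex of $G_0$ has degree in $[\tfrac{c}{2}n^{1/3},\,2cn^{1/3}]$. Moreover, the expected number of triangles through a fixed vertex is $\binom{n-1}{2}p^3 = O(1)$, so a concentration argument gives w.h.p.\ every vertex incident to $O(\log n)$ triangles; hence the deletions perturb degrees only by a lower-order term, giving degree bounds with appropriate $C', C''$.

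A first-moment estimate on stable sets shows $\alpha(G_0) \le a_0 := C^* n^{2/3}\log n$ w.h.p., provided $cC^* > 2/3$: indeed,
\begin{align*}
\Pr[\alpha(G_0) \ge a_0] \le \binom{n}{a_0}(1-p)^{\binom{a_0}{2}} \le \exp\bigl(a_0 \log(en/a_0) - p\tbinom{a_0}{2}\bigr) = \exp(-\Omega(n^{2/3}\log^2 n)).
\end{align*}

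For the main claim, let $C$ be sufficiently large and set $k := \tfrac{C}{2}(a+b)\log n$. Since $G \subseteq G_0$, any pair $(A,B)$ bad in $G$ satisfies $e_{G_0}(A,B) \ge k$, so it essentially suffices to show that w.h.p.\ no disjoint $A, B \subseteq V(G_0)$ with $A$ stable in $G_0$ has $e_{G_0}(A,B) \ge k$ (the discrepancy arising from sets stable in $G$ but not in $G_0$ is absorbed with minor modification, since only $O(n)$ edges are deleted). For fixed $A, B$ of sizes $a, b$, independence of edge events gives
\begin{align*}
\Pr[A\text{ stable in }G_0 \text{ and } e_{G_0}(A,B) \ge k] = (1-p)^{\binom{a}{2}}\,\Pr[\Bin(ab, p) \ge k].
\end{align*}
Union-bounding over $A, B$ using $\binom{n}{a}\binom{n-a}{b} \le n^{a+b}$ and restricting to $a \le a_0$ (larger $a$ excluded by the bound on $\alpha$), the key observation is
\begin{align*}
\frac{abp}{k} = \frac{2p\cdot ab/(a+b)}{C\log n} \le \frac{2p\,a_0}{C\log n} = \frac{2cC^*}{C} \le \frac{1}{e^2}
\end{align*}
for $C \ge 2e^2 cC^*$, where we used $ab/(a+b) \le \min(a,b) \le a \le a_0$. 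The Chernoff bound then yields $\Pr[\Bin(ab, p) \ge k] \le (e\,abp/k)^k \le e^{-k}$, so $\Pr[\mathcal{E}_{a,b}] \le n^{(a+b)(1 - C/2)} \le n^{-(a+b)}$ for $C \ge 4$. Summing over $a, b \ge 1$ gives $O(n^{-2}) = o(1)$.

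The main obstacle lies in the coordinated choice of $c$, $C^*$, $C$, and in the necessity of first bounding $\alpha(G_0)$. Without the restriction $a \le a_0$, in the regime $a = \Theta(n^{2/3}\log n)$ and $b = \Theta(n)$ the ratio $abp/k$ can approach $1$, making Chernoff ineffective; and the stability penalty $(1-p)^{\binom{a}{2}} = \exp(-\Theta(n^{2/3}\log^2 n))$ alone cannot beat the $\binom{n-a}{b} \approx 2^n$ union-bound factor. The stability bound on $\alpha$ collapses this difficult case into a uniform Chernoff estimate.
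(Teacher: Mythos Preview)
Your approach is very close to the paper's, but there is a genuine gap in the passage from $G_0$ to the triangle-free graph $G$. You delete one \emph{edge} from each triangle, whereas the paper deletes one \emph{vertex} from each triangle. This distinction matters precisely at the step you hand-waved: a set $A$ that is stable in $G$ need not be stable in $G_0$, so the bound $|A|\le a_0=\alpha(G_0)$ on which your Chernoff estimate rests is not available. Your remark that ``only $O(n)$ edges are deleted'' does not repair this. Deleting $\Theta(n)$ edges can raise the stability number far above $a_0$; a routine edge-discrepancy argument in $G_0$ yields only $\alpha(G)=O(n^{5/6})$, and in the regime $a=\Theta(n^{5/6})$, $b=\Theta(n)$ one has $abp/k=\Theta(n^{1/6}/\log n)\to\infty$, so the bound $(eabp/k)^k$ is vacuous. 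This is exactly the ``difficult case'' you flagged in your closing paragraph, resurrected after the deletion step.

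Under vertex deletion, by contrast, every induced subgraph of $G^*$ is already an induced subgraph of $G_0$; in particular every semi-bipartite induced subgraph of $G^*$ is one of $G_0$, and the no-dense-semi-bipartite property passes down directly. The paper further streamlines the union bound by first reducing (via Proposition~\ref{prop:reduction}) to parts of equal size $|A|=|B|\le\alpha(G_0)$, but your treatment of general $(a,b)$ would also go through once the stability issue is handled correctly.
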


\noindent
This is a relatively routine probabilistic construction using the binomial random graph, but we include the details for completeness. Theorem~\ref{thm:trianglebip} also certifies sharpness of the bounds in Theorems~\ref{thm:integral},~\ref{thm:fractional},~\ref{thm:Joh96a} and~\ref{thm:aksrev}.

Before continuing, let us make the convenient observation that, if we do not mind constant factors, it suffices to consider only semi-bipartite subgraphs with both parts of equal size.

\begin{proposition}\label{prop:reduction}
Suppose $A,B\subseteq G$ are disjoint with $|A|\ge |B|$ and satisfy that the average degree of $G[A,B]$ is $d$.
Then there exists $A'\subseteq A$ with $|A'|=|B|$ such that the average degree of $G[A',B]$ is at least $d/2$.
\end{proposition}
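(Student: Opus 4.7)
The plan is a one-shot probabilistic argument: pick $A'$ uniformly at random among all $|B|$-element subsets of $A$, and compute the expected number of edges between $A'$ and $B$ by linearity of expectation.

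First I would translate the hypothesis into a count of edges. Writing $e = |E(G[A,B])|$, the assumption that $G[A,B]$ has average degree $d$ means $e = d(|A|+|B|)/2$, and since $|A|\ge|B|$ we get the crude but sufficient bound $e \ge d|A|/2$.

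Next, let $A'$ be a uniformly random subset of $A$ of size exactly $|B|$. For any vertex $u\in A$, the probability that $u\in A'$ is $|B|/|A|$, so for any edge $uv\in E(G[A,B])$ (with $u\in A$, $v\in B$), the probability that $uv\in E(G[A',B])$ is $|B|/|A|$. By linearity of expectation,
\begin{equation*}
\Exp\bigl(|E(G[A',B])|\bigr) \;=\; e\cdot\frac{|B|}{|A|} \;\ge\; \frac{d|A|}{2}\cdot\frac{|B|}{|A|} \;=\; \frac{d|B|}{2}.
\end{equation*}
Hence there exists a choice of $A'$ with $|A'|=|B|$ and $|E(G[A',B])|\ge d|B|/2$. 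Since the bipartite graph $G[A',B]$ has $|A'|+|B|=2|B|$ vertices, its average degree is at least
\begin{equation*}
\frac{2\cdot (d|B|/2)}{2|B|} \;=\; \frac{d}{2},
\end{equation*}
which is the required conclusion. There is no real obstacle here; the only thing to be careful about is consistently using the bipartite-graph convention $\avgdeg = 2|E|/(|A'|+|B|)$ matching the one used implicitly in the rest of Subsection~\ref{sub:semi}.
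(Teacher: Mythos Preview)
Your proof is correct. The paper's argument is essentially the same computation but reached deterministically: it takes $A'$ to be the $|B|$ vertices of $A$ of largest degree into $B$, which immediately guarantees $|E(G[A',B])|\ge \tfrac{|B|}{|A|}\,e$ (the top $|B|$ vertices carry at least the average share), and then finishes with the identical arithmetic $\tfrac{|B|}{|A|}\cdot\tfrac{d}{2}(|A|+|B|)\big/|B| = \tfrac{d(|A|+|B|)}{2|A|}\ge d/2$. Your random-subset argument and the paper's greedy choice yield the same key inequality $|E(G[A',B])|\ge e\,|B|/|A|$; the only difference is that the paper's version is constructive, while yours gives existence via expectation.
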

\begin{proof}
Take $A'$ to be the vertices in $A$ with the $|B|$ largest degrees. 
The number of edges in $G[A',B]$ is at least $\tfrac{|B|}{|A|}|E(G[A,B])|\ge \tfrac{|B|}{|A|} \cdot\tfrac{d}2 (|A|+|B|)$. 
Thus the average degree of $G[A',B]$ is at least $d (|A|+|B|)/2|A| \ge d/2$.
\end{proof}

In the following result we determine up to a constant factor the largest average degree over all semi-bipartite induced subgraphs in the binomial random graph. For Theorem~\ref{thm:trianglebip}, we only need the upper bound.

\begin{proposition}\label{prop:erdosrenyi}
In the binomial random graph on $[n]$ with edge probability $p$, where $np \to\infty$ as $n\to\infty$ and $p<0.99$, the largest average degree of a semi-bipartite induced subgraph is $\Theta(\log np)$ with high probability.
\end{proposition}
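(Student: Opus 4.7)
The plan is to establish the two bounds separately. For the lower bound, I would use a two-round exposure: partition $V=[n]$ into equal halves $V_1,V_2$ and first reveal only the edges of $G[V_1]$, which form a copy of $G(n/2,p)$. Since $(n/2)p\to\infty$, standard second-moment concentration for the independence number yields with high probability an independent set $A\subseteq V_1$ of size $|A|=\Omega(\log(np)/p)$. Now the edges between $A$ and $V_2$ have not yet been exposed, so $e(A,V_2)\sim\Bin(|A|\cdot(n/2),p)$, and a Chernoff bound gives $e(A,V_2)\ge\tfrac12 p|A|(n/2)$ with high probability. Because $|A|=o(n)$, the semi-bipartite induced subgraph on $A\cup V_2$ (with stable part $A$) then has average degree at least $\tfrac12 p|A|=\Omega(\log(np))$.

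For the upper bound I would use a first moment argument. By Proposition~\ref{prop:reduction}, it suffices to show that with high probability there is no pair of disjoint sets $A,B\subseteq V$ with $|A|=|B|=k$, with $A$ stable, and with $e(A,B)\ge dk$, where $d:=C\log(np)$ and $C$ is a sufficiently large absolute constant. Combining $\binom{n}{k}\le(en/k)^k$, the probability $(1-p)^{\binom{k}{2}}\le\exp(-pk(k-1)/2)$ that $A$ is stable, and the Chernoff bound $\Pr[\Bin(k^2,p)\ge dk]\le(epk/d)^{dk}$, the expected number of such bad pairs is at most
\[
(en/k)^{2k}\exp\!\bigl(-pk(k-1)/2\bigr)(epk/d)^{dk}.
\]
Since $e(A,B)\le k^2$, the constraint forces $k\ge d$; and since the factor $\binom{n}{k}(1-p)^{\binom{k}{2}}$ is super-exponentially small for $k$ beyond $(2+o(1))\log(np)/p$, only the range $d\le k=O(\log(np)/p)$ matters. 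In that range $pk=O(\log(np))$, so $epk/d\le 1/e$ once $C$ is large enough, making the Chernoff factor $(epk/d)^{dk}$ small enough to swamp the prefactors; summing over $k$ then yields $o(1)$.

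The main obstacle is uniformly controlling the first moment estimate across the whole regime $np\to\infty$ with $p<0.99$, in particular in the sparse regime where $\log(np)\ll\log n$ and the prefactor $(en/k)^{2k}$ is large. Resolving this requires carefully combining the exponential savings from $\exp(-pk(k-1)/2)$ and from the Chernoff factor, and exploiting the restricted range of $k$ above so that a sufficiently large but absolute constant $C$ suffices.
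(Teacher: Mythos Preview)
Your upper bound is essentially the paper's argument. The paper also reduces via Proposition~\ref{prop:reduction} to balanced pairs $|A|=|B|=k$ and runs a first-moment computation; the only difference is that instead of carrying the stability factor $(1-p)^{\binom{k}{2}}$ through, the paper first conditions on the high-probability event that the stability number is at most $(2+\eps)\log_b(np)$ (with $b=1/(1-p)$), which immediately caps $k$ at the same threshold you identify, and then does the union bound over \emph{all} pairs of $k$-sets. Your variant and the paper's are equivalent in spirit. Both share the delicacy you flag in the very sparse regime $\log(np)=o(\log n)$: the crude estimate $(epk/d)^{dk}\le e^{-dk}$ does not on its own beat $(en/k)^{2k}$ when $k$ is near $d$, and one must retain the full factor $(epk/d)^{dk}$ (which for small $k$ carries an extra $\log(1/p)$) to close the gap there. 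The paper does not address this point either; in its applications $p$ is always a fixed negative power of $n$, where the issue does not arise.

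Your lower bound is genuinely different. The paper instead quotes the classical result $\chi(G_{n,p})=(1+o(1))n/(2\log_b np)$ with high probability, combines it with the high-probability minimum degree $(1+o(1))np$, and invokes Theorem~\ref{thm:fractional} to extract a \emph{bipartite} induced subgraph of average degree $(1+o(1))p\log_b(np)$. Your two-round exposure is more elementary --- it avoids both the chromatic-number input and Theorem~\ref{thm:fractional} --- and produces a semi-bipartite witness directly; the paper's route gives a slightly stronger conclusion (both sides stable) and a sharper constant.
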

\begin{proof}
Fix $\eps>0$.
With high probability,  
the stability number is at most $(2+\eps)\log_b np$, where $b=1/(1-p)$ (cf.~\cite{GrMc75}). By Proposition~\ref{prop:reduction}, we can just consider disjoint vertex subsets $A, B$ with $|A|=|B|=k$, where $k \le (2+\eps)\log_b np$. For a given $k$, the expected number of such $A,B$ where $G[A,B]$ has average degree at least $C\log np$ is as $n\to\infty$ at most
\begin{align*}
&\binom{n}{2k} \binom{2k}{k} \Pr(\Bin(k^2,p) \ge Ck\log np)\\
& \le (en/k)^{2k} \exp(-Ck\log np) < \exp\left(-\frac{C}{2}k\log np\right),
\end{align*}
where we used a Chernoff bound (cf.~\cite[Cor~2.4]{JLR00}) and took a sufficiently large fixed choice of $C>0$. Crudely summing this estimate in the range $C\log np \le k \le (2+\eps)\log_b np$, Markov's inequality implies the chance there is a semi-bipartite induced subgraph with average degree at least $C\log np$ is, as $n\to\infty$, at most
\begin{align*}
(2+\eps)(\log_b np) \exp\left(-\frac{C^2}{2}\log^2 np\right) \to 0
\end{align*}
for $C$ chosen large enough.

For the lower bound, first note by a Chernoff bound (cf.~\cite[Cor~2.3]{JLR00}) that the number of edges is $(1+o(1))n^2p/2$, and so the minimum degree is at least $(1+o(1))np/2$, with high probability. By classic results on the chromatic number of the random graph~\cite{Bol88,Luc91,MaKu90}, with high probability the chromatic number is $(1+o(1))n/(2\log_b np)$. It therefore follows from Theorem~\ref{thm:fractional} that there is a bipartite induced subgraph with average degree at least $(1+o(1))p\log_b np$ with high probability, as required.
\end{proof}

\begin{proof}[Proof of Theorem~\ref{thm:trianglebip}]
Let $p=Dn^{-2/3}$ for some fixed $0<D<2^{-1/4}$.
Consider the binomial random graph on $[n]$ with edge probability $p$.
The expected degree of a vertex is $p(n-1) \sim Dn^{1/3}$. By a Chernoff (cf.~\cite[Cor~2.3]{JLR00}) and a union bound, the minimum degree is less than $Dn^{1/3}/2$ or the maximum degree is more than $3Dn^{1/3}/2$ 
with probability at most $2n\exp(-Dn^{1/3}/12)\to 0$ as $n\to \infty$.
The expected number of triangles is at most $n^3p^3/6=D^3n/6$.
By Markov's inequality, the probability that there are at least $D^3n/3$ triangles is at most $1/2$.
By Proposition~\ref{prop:erdosrenyi} there exists $C>0$ such that, for all $n$ large enough, with positive probability there is a graph $G$ that has minimum degree at least $Dn^{1/3}/2$, maximum degree at most $3Dn^{1/3}/2$, fewer than $D^3n/3$ triangles, and no semi-bipartite induced subgraph of average degree at least $C\log np$.
Assuming $n$ is large enough, fix such a graph $G$ and remove an arbitrary vertex from each triangle. This leaves a triangle-free graph $G^*$ with at least $(1-D^3/3)n>0$ vertices and more than
$Dn^{4/3}/4-(D^3n/3)(3Dn^{1/3}/2) = D(1-2D^4)n^{4/3}/4>0$ edges, so all degrees between $D(1-2D^4)n^{1/3}/4$ and $3Dn^{1/3}/2$.
Moreover by monotonicity $G^*$ contains no semi-bipartite induced subgraph of average degree at least $C\log n\ge C\log np$.
\end{proof}

\subsection{$K_r$-free graphs}

In this section we explain how to extend the results obtained previously for triangle-free graphs to the case of $K_r$-free graphs, with $r\ge 4$.

\medskip

The following result has the same proof as that of Theorem~\ref{thm:Joh96a}, but uses instead the fact that $K_r$-free graphs of maximum degree $\Delta$ have chromatic number at most $200r \Delta \log \log \Delta/\log \Delta$. This was recently proved by Molloy~\cite{Mol17}, improving an earlier result of  Johansson~\cite{Joh96b}.

\begin{theorem}\label{thm:kfree}
There is some constant $C>0$ such that, for every $r\ge 4$, any $K_r$-free graph with average degree $d$ and maximum degree $\Delta$ contains a bipartite induced subgraph of average degree at least $\frac{d \log\Delta}{Cr \Delta \log\log\Delta}$.\qed
\end{theorem}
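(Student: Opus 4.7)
The plan is to mimic the proof of Theorem~\ref{thm:Joh96a} verbatim, simply substituting Molloy's bound on $\chi(G)$ for $K_r$-free $G$ in place of the triangle-free bound. So I first invoke Molloy's theorem~\cite{Mol17}: every $K_r$-free graph $G$ with maximum degree $\Delta$ satisfies
\begin{equation*}
\chi(G) \;\le\; \frac{200\, r\, \Delta \log\log \Delta}{\log \Delta}.
\end{equation*}
(For $\Delta$ small enough that $\log \log \Delta$ is not well-defined or is nonpositive, we may absorb the corresponding finite set of cases into the constant $C$, since for bounded $\Delta$ the claimed lower bound on average degree is bounded by an absolute constant and so follows trivially from, e.g., a greedy argument. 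I would mention this once and then ignore it.)

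Second, I apply Theorem~\ref{thm:fractional} to $G$. Since the fractional chromatic number is at most the chromatic number, we get a bipartite induced subgraph of average degree at least
\begin{equation*}
\frac{d}{\chi(G)} \;\ge\; \frac{d \log \Delta}{200\, r\, \Delta \log\log\Delta},
\end{equation*}
and hence the desired conclusion holds with $C = 200$ (or any slightly larger constant chosen to absorb the small-$\Delta$ edge cases).

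There is essentially no obstacle here — the work has been outsourced to Molloy's chromatic number bound and to Theorem~\ref{thm:fractional}. The only thing worth being careful about is the dependence on $r$: Molloy's bound is linear in $r$, which is why the denominator of the theorem carries a factor of $r$, and this linearity is inherited directly through the division by $\chi(G)$. No probabilistic or structural argument beyond that of Theorem~\ref{thm:fractional} is needed.
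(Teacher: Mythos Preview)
Your proposal is correct and matches the paper's approach exactly: the paper explicitly states that Theorem~\ref{thm:kfree} ``has the same proof as that of Theorem~\ref{thm:Joh96a}, but uses instead the fact that $K_r$-free graphs of maximum degree $\Delta$ have chromatic number at most $200r \Delta \log \log \Delta/\log \Delta$'' (Molloy~\cite{Mol17}), followed by an application of Theorem~\ref{thm:fractional}. There is nothing to add.
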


\noindent
The next result is shown in the same way as Theorem~\ref{thm:AKS80}, but using a slightly more general bound~\cite{AKS80} for the off-diagonal Ramsey numbers: $R(r,\ell) = O(\ell^{r-1}/(\log \ell)^{r-2})$ as $\ell\to \infty$.

\begin{theorem}\label{thm:dense}
For each $r\ge 3$, any $K_r$-free graph $G$ on $n$ vertices with average degree $d=\Omega(n^{1-1/r}(\log n)^{1/(r-1)})$ contains a bipartite induced subgraph of average degree $\Omega(\log n)$ as $n\to\infty$.\qed
\end{theorem}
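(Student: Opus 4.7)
My plan is to mimic the proof of Theorem~\ref{thm:AKS80} line for line, replacing the triangle-free Ramsey bound $R(3,\ell)=O(\ell^{2}/\log\ell)$ with the $K_{r}$-free version $R(r,\ell)=O(\ell^{r-1}/(\log\ell)^{r-2})$, and then checking that the exponents of $n$ and $\log n$ still match up so that Theorem~\ref{thm:fractional} yields an induced bipartite subgraph of average degree $\Omega(\log n)$.

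First, inverting the Ramsey bound, every $K_{r}$-free induced subgraph on at least $m$ vertices contains a stable set of size at least $c\,m^{1/(r-1)}(\log m)^{(r-2)/(r-1)}$ for some constant $c=c(r)>0$. Setting $m=n^{(r-1)/r}$ (the natural generalisation of the threshold $n^{2/3}$ that appears in the triangle-free argument), every induced subgraph of $G$ on at least $n^{(r-1)/r}$ vertices contains a stable set of size at least
\[
s(n):=c\,n^{1/r}(\log n)^{(r-2)/(r-1)}.
\]
Repeatedly extract such stable sets from $G$ until fewer than $n^{(r-1)/r}$ vertices remain; let $H$ be the induced subgraph on the union of the extracted classes. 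By construction $\chi(H)\le n/s(n)=O\!\bigl(n^{(r-1)/r}/(\log n)^{(r-2)/(r-1)}\bigr)$.

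Next I bound the average degree of $H$ from below. At most $n\cdot n^{(r-1)/r}=n^{(2r-1)/r}$ edges of $G$ can have an endpoint outside $H$, so the number of edges of $H$ is at least $nd/2 - n^{(2r-1)/r}$. Using the hypothesis $d\ge C n^{(r-1)/r}(\log n)^{1/(r-1)}$, the term $nd/2$ dominates $n^{(2r-1)/r}$ for $n$ large (because of the $(\log n)^{1/(r-1)}$ factor), giving $|E(H)|\ge \tfrac{C}{4}n^{(2r-1)/r}(\log n)^{1/(r-1)}$ and hence average degree of $H$ at least $\tfrac{C}{2}n^{(r-1)/r}(\log n)^{1/(r-1)}$.

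Finally, applying Theorem~\ref{thm:fractional} to $H$ (which has fractional chromatic number at most $\chi(H)$) yields an induced bipartite subgraph of $H\subseteq G$ of average degree at least
\[
\frac{\tfrac{C}{2}n^{(r-1)/r}(\log n)^{1/(r-1)}}{O\!\bigl(n^{(r-1)/r}/(\log n)^{(r-2)/(r-1)}\bigr)}=\Omega\!\left((\log n)^{\frac{1}{r-1}+\frac{r-2}{r-1}}\right)=\Omega(\log n),
\]
exactly because the two exponents of $\log n$ sum to $1$. There is no genuine obstacle here; the only thing to monitor is that the exponent of $n$ in the density condition on $d$ is chosen so that extracting stable sets leaves a graph whose density is still polynomially large, while the logarithmic factor in the hypothesis is precisely what is needed to absorb the $(\log n)^{(r-2)/(r-1)}$ loss coming from the Ramsey bound.
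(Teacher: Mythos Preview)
Your proof is correct and follows essentially the same approach the paper indicates: it mimics the proof of Theorem~\ref{thm:AKS80} line by line, replacing the $r=3$ Ramsey bound with $R(r,\ell)=O(\ell^{r-1}/(\log\ell)^{r-2})$, taking the threshold $n^{(r-1)/r}$ in place of $n^{2/3}$, and then applying Theorem~\ref{thm:fractional}. The exponent bookkeeping you carry out is exactly right, and for $r=3$ it specialises to the paper's original argument.
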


Note also that a refined analysis in the proof of Lemma~\ref{lem:aksrev}, using~\cite[Lemma 1]{She95} instead of the simple counting argument, yields that there is an absolute constant $C>0$, such that in a $K_r$-free graph $G$, the expected sum of the degrees of the vertices in a random stable set (from the uniform distribution) is at least the sum of $C \tfrac{\log d(v)} {r \log \log d(v)}$, over all the vertices $v\in G$. As a consequence, we obtain the following $K_r$-free analog of Theorem~\ref{thm:aksrev}.

\begin{theorem}\label{thm:sherev}
There is a constant $C>0$ such that for any $r\ge 4$, any $K_r$-free graph of minimum degree $d\ge 3$
contains a semi-bipartite induced subgraph of average degree at least
$C \tfrac{\log d} {r \log \log d}$.\qed
\end{theorem}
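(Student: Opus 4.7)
The plan is to mimic the strategy used for Theorem~\ref{thm:aksrev} (via Lemma~\ref{lem:aksrev}), but replace the elementary counting at the conditional step with Shearer's bound on the independence number of $K_{r-1}$-free graphs from~\cite[Lemma 1]{She95}. The paragraph immediately preceding the statement already advertises this substitution, so my job is to verify that the substitution goes through cleanly and that the resulting bound on a uniform random stable set converts into a semi-bipartite induced subgraph in the standard way.

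Concretely, let $G$ be a $K_r$-free graph of minimum degree $d\ge 3$, and let $\mathbf{S}$ be a uniformly random stable set of $G$. Following the proof of Lemma~\ref{lem:aksrev}, I set $X_v = d(v)\cdot |\{v\}\cap \mathbf{S}| + |N(v)\cap \mathbf{S}|$ and condition on $\mathbf{S}\cap(V(G)\setminus N[v]) = T$ for an arbitrary stable set $T$ of $G-N[v]$. Let $U_T\subseteq N(v)$ be the set of neighbours of $v$ with no neighbour in $T$, and let $H_T = G[U_T]$. The key structural observation is that $H_T$ is $K_{r-1}$-free: a clique of size $r-1$ inside $U_T\subseteq N(v)$ would extend via $v$ to a $K_r$ in $G$. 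Under the conditioning, $\mathbf{S}$ is uniformly distributed over the set consisting of $T\cup\{v\}$ together with all sets of the form $T\cup I$ for $I$ a stable set of $H_T$.

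The main computation is then to lower bound $\Exp(X_v\,|\, \mathbf{S}\cap(V(G)\setminus N[v])=T)$ by $\Omega\bigl(\tfrac{\log d(v)}{r\log\log d(v)}\bigr)$ uniformly in $T$. When $v\in \mathbf{S}$ the term $d(v)\cdot|\{v\}\cap\mathbf{S}|$ already contributes $d(v)$, so the only hard case is when $|U_T|$ is very large (say, comparable to $d(v)$) and most of the probability mass lies on the independent sets of $H_T$. Here Shearer's lemma applied to $H_T$, which is a $K_{r-1}$-free graph on $|U_T|$ vertices of some average degree $\bar{d}_T\le |U_T|$, implies that a uniformly random stable set of $H_T$ has expected size at least $c\,|U_T|\tfrac{\log \bar{d}_T}{(r-1)\log\log \bar{d}_T}$ for some absolute $c>0$ (with the convention that small-degree regimes are handled separately and trivially). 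A short case analysis according to the relative sizes of $|U_T|$, $d(v)$, and the number of stable sets of $H_T$ then yields the claimed $\Omega\bigl(\tfrac{\log d(v)}{r\log\log d(v)}\bigr)$ bound on $\Exp(X_v\mid \cdots)$, and therefore on $\Exp(X_v)$.

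Summing over $v$ and noting $\sum_v X_v = 2\sum_{v\in\mathbf{S}}d(v)$ gives $\Exp\bigl(\sum_{v\in\mathbf{S}}d(v)\bigr) \ge C\sum_{v\in G}\tfrac{\log d(v)}{r\log\log d(v)}$, which is the quoted refinement. Since the left-hand side equals the expected number of edges between $\mathbf{S}$ and $V(G)\setminus\mathbf{S}$, the probabilistic method then furnishes a semi-bipartite induced subgraph whose average degree (with respect to the semi-bipartition $(\mathbf{S}, V\setminus\mathbf{S})$, ignoring edges inside $V\setminus\mathbf{S}$) is at least $\tfrac{C}{r}\cdot\tfrac{\log d}{\log\log d}$, which is exactly the statement of Theorem~\ref{thm:sherev}.

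The part I expect to be most delicate is the Shearer step. Shearer's bound is typically stated for \emph{deterministic} independence numbers, while Lemma~\ref{lem:aksrev} crucially uses \emph{expected} size under the uniform distribution on stable sets; so I need either a direct probabilistic version or a short argument converting his deterministic bound into an expected-size bound (for example, by an averaging argument over a chain of removals, or by noting that Shearer's proof itself is entropy-based and already gives the probabilistic statement). Everything else is bookkeeping with $\log/\log\log$ factors and the reduction from "sum of degrees in $\mathbf{S}$" to "average degree of a semi-bipartite induced subgraph" already established in the proof of Theorem~\ref{thm:aksrev}.
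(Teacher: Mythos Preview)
Your proposal is correct and follows exactly the approach the paper sketches: mimic Lemma~\ref{lem:aksrev}, note that $H_T=G[U_T]$ is $K_{r-1}$-free, replace the elementary count by~\cite[Lemma~1]{She95} to lower bound the conditional expectation of $X_v$, sum, and convert as in Theorem~\ref{thm:aksrev}. Your closing worry is unnecessary: Shearer's Lemma~1 is already a statement about the expected size of a \emph{uniformly random} stable set, so no conversion from the deterministic independence number is required.
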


For completeness, we merely state two upper bounds relevant to Conjectures~\ref{conj:cliquebip} and~\ref{conj:girthbip}. The proofs routinely adapt the proof of Theorem~\ref{thm:trianglebip}.

\begin{proposition}\label{prop:girthbip}
Fix an integer $g \ge 4$.
There is a constant $C>0$ such that for every large enough $n$ there is a graph on $n$ vertices of girth at least $g$ with minimum and maximum degrees both $\Theta(n^{1/(g-1)})$ that contains no semi-bipartite induced subgraph of average degree at least $C\log n$.\qed
\end{proposition}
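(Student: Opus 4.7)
The plan is to adapt the proof of Theorem~\ref{thm:trianglebip} to the binomial random graph with edge probability calibrated to girth $g$: I would take $p = Dn^{-(g-2)/(g-1)}$ for a small constant $D=D(g)>0$, so that each vertex has expected degree $\Theta(n^{1/(g-1)})$. A Chernoff bound and union bound yield that, with high probability, the minimum degree is at least $Dn^{1/(g-1)}/2$ and the maximum degree is at most $3Dn^{1/(g-1)}/2$.

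For each $3 \le \ell \le g-1$, the expected number of $\ell$-cycles is at most $\binom{n}{\ell}\tfrac{(\ell-1)!}{2}p^\ell \le (np)^\ell/(2\ell) = O(D^\ell n^{\ell/(g-1)})$, and summing over $\ell$ this is $O(D^{g-1}n)$. Markov's inequality then gives probability at least $1/2$ that the total number of cycles of length less than $g$ is of this order. I would destroy every such short cycle by removing one of its vertices; the number of edges lost is at most $O(D^{g-1} n \cdot n^{1/(g-1)})$, which is a small fraction of the total $\Theta(D n^{g/(g-1)})$ edges provided $D$ is chosen small enough depending on $g$. The resulting graph $G^*$ has girth at least $g$ and degrees of the claimed order.

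The key remaining step is an analog of Proposition~\ref{prop:erdosrenyi}: with high probability $G(n,p)$ has no semi-bipartite induced subgraph of average degree $\ge C\log n$. The stability number satisfies $\alpha = (2+o(1))\log_b(np)$ with $b=1/(1-p)$, yielding $\alpha = O(n^{(g-2)/(g-1)}\log n)$ and $kp \le 2\log(np)$ for every $k \le \alpha$. By Proposition~\ref{prop:reduction} it suffices to consider ordered pairs of disjoint sets $A,B$ with $|A|=|B|=k\le\alpha$. A Chernoff estimate combined with the bound on $kp$ gives
\[
\Pr(\Bin(k^2,p) \ge Ck\log(np)) \le \left(\frac{ekp}{C\log(np)}\right)^{Ck\log(np)} \le \left(\frac{2e}{C}\right)^{Ck\log(np)},
\]
and a union bound over the at most $(en/k)^{2k}$ choices of $(A,B)$ followed by summing $k$ from $C\log n$ to $\alpha$ converges to $0$ once $C=C(g)$ is large enough: since $\log(np) = \Theta((\log n)/(g-1))$, this reduces to the elementary inequality $C\log(C/(2e)) > 2(g-1)$.

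The main delicate point is balancing the constants: $D$ has to be small enough that short-cycle deletion does not damage the degree bounds, while $C$ must be large enough in terms of $g$ so that the union bound over semi-bipartite structures closes across a range of $k$ extending all the way up to $\alpha = \Theta(n^{(g-2)/(g-1)}\log n)$, which is much larger than in the triangle-free case. Intersecting the three high-probability events and invoking monotonicity of ``no dense semi-bipartite induced subgraph'' under vertex deletion produces, for all $n$ large enough, a graph witnessing the proposition.
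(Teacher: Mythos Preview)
Your adaptation is correct and is exactly the routine modification the paper has in mind: recalibrate $p$ so that $np=\Theta(n^{1/(g-1)})$, delete one vertex per short cycle (the expected count $\sum_{\ell=3}^{g-1}(np)^\ell/(2\ell)=O(D^{g-1}n)$ is dominated by $\ell=g-1$), and invoke the semi-bipartite upper bound for $G(n,p)$. Note that you need not re-derive that last part---Proposition~\ref{prop:erdosrenyi} already applies verbatim since $np\to\infty$ and $\log(np)=\Theta(\log n)$; but your direct union-bound computation, reducing to $C\log(C/(2e))>2(g-1)$, is also fine. The only loose end is the same one present in the paper's own proof of Theorem~\ref{thm:trianglebip}: after vertex deletion you control the \emph{average} degree, not the minimum degree, so strictly one should pass to a subgraph of minimum degree at least half the average (which still has $\Theta(n)$ vertices and hence the claimed degree order).
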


\begin{proposition}\label{prop:cliquebip}
Fix an integer $r \ge 3$.
There is a constant $C>0$ such that for every large enough $n$ there is a $K_r$-free graph on $n$ vertices with minimum and maximum degrees both $\Theta(n^{1-2/r})$ that contains no semi-bipartite induced subgraph of average degree at least $C\log n$.\qed
\end{proposition}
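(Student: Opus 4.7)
The plan is to adapt the argument of Theorem~\ref{thm:trianglebip} to the $K_r$-free regime by scaling the edge probability. Set $p = Dn^{-2/r}$ for a sufficiently small fixed constant $D>0$ and take $G \sim G(n,p)$.

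First I will establish that, with high probability, every degree in $G$ is $\Theta(n^{1-2/r})$. The expected degree is $(n-1)p \sim Dn^{1-2/r}$, which tends to infinity, so a Chernoff bound together with a union bound ensures that with high probability every vertex has degree in $[Dn^{1-2/r}/2,\, 3Dn^{1-2/r}/2]$. Second, the expected number of copies of $K_r$ in $G$ is
\[
\binom{n}{r} p^{\binom{r}{2}} \;\sim\; \frac{D^{r(r-1)/2}}{r!}\, n,
\]
since $p^{\binom{r}{2}} = D^{r(r-1)/2} n^{-(r-1)}$. By Markov's inequality, with probability at least $1/2$ the number of $K_r$'s is at most twice this expectation, i.e.\ linear in $n$. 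Third, because $np = Dn^{1-2/r}\to\infty$ and $p<0.99$, Proposition~\ref{prop:erdosrenyi} applies and yields an absolute constant $C^\ast>0$ such that with high probability $G$ has no semi-bipartite induced subgraph of average degree at least $C^\ast \log(np) = (1-2/r)C^\ast \log n + O(1)$.

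Combining these three events (which jointly hold with positive probability), fix such a realisation $G$ for $n$ large. Destroy every $K_r$ by deleting one vertex from each; only $O(n)$ vertices are removed, and since every vertex has degree at most $3Dn^{1-2/r}/2$, the number of edges destroyed is at most $O(n)\cdot n^{1-2/r}=O(n^{2-2/r})$. The original graph has $\sim Dn^{2-2/r}/2$ edges, so choosing $D$ small enough guarantees that this loss is a tiny fraction of the total and the resulting graph $G^\ast$ has $(1-o(1))n$ vertices with average degree $\Theta(n^{1-2/r})$. It is $K_r$-free by construction, and since the absence of dense semi-bipartite induced subgraphs is monotone under vertex deletion, $G^\ast$ still contains no semi-bipartite induced subgraph of average degree at least $C\log n$ for an appropriate $C>0$.

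To promote the average-degree bound to minimum- and maximum-degree bounds, I finish by a degeneracy-style prune: iteratively remove any vertex of degree below $Dn^{1-2/r}/8$. Each removal discards only $O(n^{1-2/r})$ edges, so this terminates long before the edge budget is exhausted, producing an induced subgraph on $(1-o(1))n$ vertices with minimum degree $\ge Dn^{1-2/r}/8$; the maximum degree never grew above $3Dn^{1-2/r}/2$. Finally, if the statement's ``$n$ vertices'' is read strictly, run the argument starting from $(1+o(1))n$ vertices and pass to any $n$-vertex induced subgraph at the end, which affects degrees by at most a constant factor. The main obstacle, exactly as in the proof of Theorem~\ref{thm:trianglebip}, is that the $K_r$-killing step could a priori rob individual vertices of many neighbours; this is circumvented by first shrinking $D$ so the crude total-edge-loss bound is negligible, and then using the pruning step to restore a uniform minimum-degree guarantee.
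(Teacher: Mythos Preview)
Your proposal is correct and follows precisely the approach the paper intends: it merely instructs the reader to adapt the proof of Theorem~\ref{thm:trianglebip}, and you have done exactly this with $p=Dn^{-2/r}$ and $K_r$-counts in place of triangle-counts. Your final pruning step to restore a genuine minimum-degree bound is a welcome bit of extra rigor (the paper's own proof of Theorem~\ref{thm:trianglebip} glosses over this point); note only the harmless imprecision that the surviving vertex count after $K_r$-removal is $\Theta(n)$ rather than $(1-o(1))n$, since a small constant fraction of vertices is deleted.
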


\section{Conclusion}\label{sec:conclusion}

In the course of our research, we found alternative derivations of Theorems~\ref{thm:Joh96a}--\ref{thm:aksrev} and of the lower bound in Proposition~\ref{prop:erdosrenyi}. We have omitted these for brevity, but mention here that, like Theorem~\ref{thm:fractional}, they all rely on some appropriate proper colouring. Since it is easy to construct graphs with very high degree bipartite induced subgraphs and also high (fractional) chromatic number, naturally we wonder if there are other, possibly more ``direct'', methods to produce dense bipartite induced subgraphs. In particular, this might help to improve upon the constant factors in our bounds.

Regardless of the eventual status of Conjecture~\ref{conj:trianglebip}, Theorems~\ref{thm:AKS80} and~\ref{thm:trianglebip} hint at the following more refined problem.
\begin{problem}\label{prob:transition}
Fix $\eta\in(0,1)$. As $n\to\infty$, determine the asymptotic infimum $f_\eta(n)$ of the largest minimum degree of a bipartite induced subgraph over all triangle-free graphs of minimum degree $n^\eta$.
\end{problem}
\noindent
Theorem~\ref{thm:trianglebip} implies that $f_{1/3}(n)$ is at most logarithmic in $n$, while a simple modification of Theorem~\ref{thm:AKS80} implies that $f_{2/3+\eps}(n)$, for fixed $\eps>0$, is at least polynomial in $n$. If a guiding ``paradigm'' that couples the triangle-free process with the Erd\H{o}s--R\'enyi process (cf.~e.g.~\cite{BoKe10}) holds as well for our problem, then Proposition~\ref{prop:erdosrenyi} suggests that perhaps $\eta$ being around $1/2$ is the transition point between logarithmic and polynomial behaviour for $f_\eta(n)$.
Note that the $\eta=1$ case is related (via Theorem~\ref{thm:integral}) to a problem of Erd\H{o}s and Simonovits, cf.~\cite{BrTh11+}.

We find it difficult to imagine that Conjecture~\ref{conj:chsep} is not true, but keep in mind that we have only tried one particular approach.

Our study in fact began by considering the separation choosability analogue of a conjecture of Alon and Krivelevich~\cite{AlKr98}. Although our paper is already brimming with conjecture, we still find this worth highlighting. Both this and the original conjecture (for choosability) remain open, and if true would considerably generalise the {\em upper} bounds in~\eqref{eqn:FKK} and~\eqref{eqn:ERT80}, respectively.
\begin{conjecture}
There is a constant $C>0$ such that $\ch_{\sep}(G) \le C\log \Delta$ for any bipartite graph $G$ with maximum degree $\Delta$.
\end{conjecture}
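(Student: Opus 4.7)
The target $k = C\log\Delta$ sits well below the Alon--Krivelevich threshold $\Theta(\Delta/\log\Delta)$ for ordinary bipartite list colouring, so any approach must crucially exploit the maximum separation hypothesis. The plan is to adapt the semi-random (R\"odl nibble) method, using $|L(u)\cap L(v)|\le 1$ in place of (or in addition to) triangle-freeness. The heuristic ``budget'' is promising: for each edge $uv$ the probability that $u$ and $v$ independently choose the same colour from uniformly sampled lists is at most $1/k^{2}$ rather than $1/k$, because $u$ and $v$ share at most one available colour. This single quadratic saving, iterated across $O(\log k)$ nibble rounds, is exactly what is needed to bring the list size down from $\Theta(\Delta/\log\Delta)$ to $\Theta(\log\Delta)$.

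Concretely I would fix $L$ a $k$-list assignment with maximum separation, for $k=C\lceil\log\Delta\rceil$, and run the usual iterative random colouring: in each round, activate each uncoloured vertex with some small constant probability $p$ and assign it a uniformly random colour from its current list, keeping the colour if no neighbour picks the same; update the lists of surviving vertices by removing colours appearing at coloured neighbours. Two vertex parameters need tracking: the current list size $k_{t}$ and, for each vertex $v$, the ``effective neighbourhood size'' $\Delta_{t}(v)=\sum_{u\in N(v)}|L_{t}(u)\cap L_{t}(v)|$, initially bounded by $\Delta$ thanks to maximum separation. A single round shrinks $k_{t}$ by a factor $(1-p/k_{t})$ in expectation (a colour $c$ survives unless some neighbour takes it) and, by the $1/k_{t}^{2}$ conflict bound, shrinks $\Delta_{t}(v)$ by a comparable factor. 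Talagrand/Azuma concentration together with the Lov\'asz Local Lemma would then close each round, preserving the invariant $\Delta_{t}\cdot k_{t}^{-1}\lesssim \Delta\cdot k^{-1}$ until both parameters are $O(1)$ and a trivial greedy step finishes the colouring.

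The main obstacle is that maximum separation is an adversarial constraint on the \emph{initial} lists and is a priori not preserved under the random nibble: after a round, the residual lists $L_{t}(u),L_{t}(v)$ for adjacent $u,v$ need not satisfy $|L_{t}(u)\cap L_{t}(v)|\le 1$, so the $1/k^{2}$ saving could evaporate. Replacing list membership with a maintained pair-sharing statistic $\Delta_{t}(v)$, as sketched above, may save the bookkeeping on average, but then the per-colour blocking probabilities $\Pr(c\text{ unavailable at }v)$ can be highly non-uniform in $c\in L(v)$, so a vertex-level invariant is insufficient; one likely needs a colour-by-colour tracking argument in the spirit of Molloy~\cite{Mol17}. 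A second difficulty is that the bipartite structure lets neighbourhoods of a vertex correlate arbitrarily through the other side, so the concentration step must be done separately on each side. Since the Alon--Krivelevich conjecture itself remains open, I expect this colour-tracking obstacle to be where the genuinely new idea is required; the heuristic counting in the first paragraph is, however, strong evidence that the correct answer is indeed $O(\log\Delta)$.
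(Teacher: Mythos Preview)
The statement you address is presented in the paper as an open \emph{conjecture} --- the separation analogue of the Alon--Krivelevich conjecture --- and the paper offers no proof. Your write-up is appropriately a heuristic programme rather than a claimed proof, and you correctly flag that the problem is open; there is nothing in the paper to compare your argument against.

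Two substantive remarks on the sketch itself. First, your stated ``main obstacle'' is not one: in any nibble procedure the residual lists satisfy $L_t(u)\subseteq L_0(u)$, hence $|L_t(u)\cap L_t(v)|\le |L_0(u)\cap L_0(v)|\le 1$, so maximum separation is preserved automatically through every round. The genuine difficulty is the one you list second, and it is present already at time $0$: writing $d_c(v)=|\{u\in N(v):c\in L(u)\}|$, separation gives only $\sum_{c\in L(v)} d_c(v)\le \Delta$, so the individual colour degrees may be arbitrarily skewed, with a single colour shared by all $\Delta$ neighbours.

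Second, the budget heuristic does not reach $O(\log\Delta)$ as you claim. Feeding the $1/k^{2}$ per-edge conflict probability into the Local Lemma in one shot yields $k=O(\sqrt{\Delta})$, and your assertion that $O(\log k)$ nibble rounds then close the gap down to $\log\Delta$ is not supported by any arithmetic. In each round both $k_t$ and the uncoloured degree shrink by comparable factors (each coloured neighbour removes at most one colour from $L(v)$, again by separation), so there is no evident mechanism by which the controlling ratio $\Delta_t/k_t^{2}$ improves under iteration. The gap between the easy $O(\sqrt{\Delta})$ and the conjectured $O(\log\Delta)$ is real, and your programme does not bridge it.
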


\subsection*{Acknowledgement}

We are grateful to Fran\c{c}ois Pirot for permission to include his improvement on our original Theorem~\ref{thm:fractional}.
We are also thankful for the helpful comments from the anonymous referee.

\subsection*{Note added}

Since the posting of our manuscript to arXiv, the second half of our paper has precipitated a number of significant further developments.
\begin{itemize}
\item Up to a logarithmic factor, Problem~\ref{prob:transition} has been solved, independently, by Cames van Batenburg, de Joannis de Verclos, Pirot and the second author~\cite{CJKP18+} and by Kwan, Letzter, Sudakov and Tran~\cite{KLST18+}.
\item In~\cite{KLST18+}, they moreover proved Conjecture~\ref{conj:cliquebip} (and thus confirmed Conjecture~\ref{conj:chsep}) and have nearly settled Conjecture~\ref{conj:trianglebip} (and thus confirmed Conjecture~\ref{conj:girthbip}) in that they have established $\Omega(\log d/\log\log d)$ bipartite induced minimum degree in $K_r$-free graphs for every fixed $r\ge 3$.
\item Forthcoming work of Davies, de Joannis de Verclos, Pirot and the second author~\cite{DJKP18+} has improved the asymptotic leading constant to $2$ in Theorem~\ref{thm:aksrev}.
\end{itemize}

\bibliographystyle{abbrv}
\bibliography{chsep}

\appendix

\section{Technical details in the proof of Lemma~\ref{lem:aksrev}}

In this section, we provide further details of the proof that for any $d\ge 1$ and $0\le k\le d$, it holds that 
\begin{align}\label{eqn:technical}
\frac{d+k2^{k-1}}{2^k+1}\ge \tfrac12\log d.
\end{align}

The inequality~\eqref{eqn:technical} is trivially satisfied if $d=1$, so we can assume that $d\ge 2$. The inequality certainly holds if $d/(2^k+1)\ge \tfrac12\log d$, so we can also assume that $2^k+1> 2d/\log d$, which is equivalent to $k> \log_2(2d/\log d-1)$.

The inequality~\eqref{eqn:technical} also holds provided we can show $k2^{k-1}/(2^k+1)\ge \tfrac12\log d$. Note that the function $x\mapsto x2^{x-1}/(2^x+1)$ is increasing for $x\ge 0$. It follows from our previous assumption that 
\[\frac{k2^{k-1}}{2^k+1} = \tfrac12k(1-(2^k+1)^{-1})
> \tfrac12(1-\tfrac{\log d}{2d})\log_2(\tfrac{2d}{\log d}-1).\]
Thus for~\eqref{eqn:technical} it suffices to establish for $d\ge 2$ that
\[(1-\tfrac{\log d}{2d})\log_2(\tfrac{2d}{\log d}-1) \ge  \log d.\]
It is routinely checked that the function $x\mapsto (1-\tfrac{\log x}{2x})\log_2(\tfrac{2x}{\log x}-1)- \log x$ for $x>1$ has a unique minimum of about $0.30$ at some $x_0\approx 9.74$.

\smallskip

We remark that as $d\to\infty$, the righthand side of inequality~\eqref{eqn:technical} can be improved to $(1-o(1))\log_2 d$.

\end{document}